\documentclass[12pt,reqno]{amsart}
\usepackage{amsmath, amsthm, amssymb, cite, bm, enumitem,bbm,color}

\topmargin 0cm
\advance \topmargin by -\headheight
\advance \topmargin by -\headsep
     
\setlength{\paperheight}{270mm}%
\setlength{\paperwidth}{192mm}%
\textheight 22.5cm
\oddsidemargin 0cm
\evensidemargin \oddsidemargin
\marginparwidth 1.25cm
\textwidth 14cm
\setlength{\parskip}{0.05cm}

\newtheorem{theorem}{Theorem}[section]
\newtheorem{lemma}[theorem]{Lemma}
\newtheorem{corollary}[theorem]{Corollary}

\theoremstyle{definition}

\theoremstyle{remark}

\numberwithin{equation}{section}

\newcommand{\abs}[1]{\lvert#1\rvert}
\newcommand{\mmod}[1]{\,\,(\text{mod}\,\,#1)}

\title{Efficient congruencing in ellipsephic sets:\\the quadratic case}
\author{Kirsti D. Biggs}
\address{Mathematical Sciences, University of Gothenburg and Chalmers Institute of Technology, 412 96 G\"oteborg, Sweden}
\email{biggs@chalmers.se}
\subjclass[2010]{{11A63, 11D45, 11L07, 11P55}}
\keywords{\hspace{-0.01in}Hardy--Littlewood\hspace{-0.01in} method, \hspace{-0.005in}efficient\hspace{-0.01in} congruencing,\hspace{-0.005in} missing\hspace{-0.01in} digits}

\begin{document}

\begin{abstract}
In this paper, we bound the number of solutions to a quadratic Vinogradov system of equations in which the variables are required to satisfy digital restrictions in a given base. 
Certain sets of permitted digits, namely those giving rise to few representations of natural numbers as sums of elements of the digit set, allow us to obtain better bounds than would be possible using the size of the set alone.
\end{abstract}
\maketitle

\section{Introduction}
Vinogradov's mean value theorem seeks to bound the number of solutions, for a fixed integer $k\geq 2$, to the system of Diophantine equations
\begin{equation}\label{VMVT}
x_1^j+\dots+x_s^j=y_1^j+\dots+y_s^j,\quad (1\leq j\leq k),
\end{equation}
where $1\leq x_i,y_i\leq X$ for all $i$. In this paper, we investigate variants of this problem in which the variables are restricted to certain subsets of the natural numbers which give us significantly stronger control over the associated mean value estimates. Specifically, the subsets of interest to us are defined by digital restrictions---further discussion requires some definitions.

Fix a subset $A\subset\mathbb{N}\cup\{0\}$ with the property that, writing $A_p=A\cap[0,p-1]$, and assuming that $2\leq \#A_p\leq p-1$, we have
\begin{equation}\label{keyAP}
\#\{(a_1,\dots,a_t)\in A_p^t\mid a_1+\dots+a_t=n\}\ll p^{\delta}
\end{equation}
for some $t\geq 2$ and some $\delta>0$, uniformly in $p$. Let
\begin{equation}\label{ellset}
\mathcal{E}=\mathcal{E}_p^{A}=\{n\in\mathbb{N}\mid \textstyle{n=\sum_i a_ip^i} \mbox{ with } a_i\in A_p\mbox{ for all }i\}
\end{equation}
be the set of natural numbers whose base $p$ expansion includes only digits from $A$. Let $I_s(X)=I_{s,2}(X)$ be the number of solutions to (\ref{VMVT}) in the case $k=2$ with $x_i,y_i\in\mathcal{E}(X)=\mathcal{E}\cap[1,X]$ for all $i$, and write $Y$ for $\#\mathcal{E}(X)$. 
\begin{theorem}\label{basicthm}
We have
\begin{equation*}
I_{s}(X)\ll X^{3\delta+\epsilon}(Y^s+Y^{2s-3t}).
\end{equation*}
\end{theorem}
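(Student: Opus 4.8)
The plan is to reduce the theorem to its critical case $s=3t$, and then to prove that case by efficient congruencing.

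Write $f(\boldsymbol\alpha)=f(\boldsymbol\alpha;X)=\sum_{x\in\mathcal{E}(X)}e(\alpha_1x+\alpha_2x^2)$ for $\boldsymbol\alpha\in[0,1)^2$, so that by orthogonality $I_s(X)=\int_{[0,1)^2}|f(\boldsymbol\alpha)|^{2s}\,d\boldsymbol\alpha$. The whole theorem follows from the single critical estimate
\[
I_{3t}(X)\ll X^{3\delta+\epsilon}Y^{3t}.
\]
Indeed, for $s\ge 3t$ the trivial bound $|f(\boldsymbol\alpha)|\le Y$ gives $I_s(X)\le Y^{2s-6t}I_{3t}(X)\ll X^{3\delta+\epsilon}Y^{2s-3t}$, which since $2s-3t\ge s$ in this range is the desired bound; while for $1\le s<3t$, Hölder's inequality on $[0,1)^2$ with exponents $3t/s$ and $3t/(3t-s)$ gives $I_s(X)\le I_{3t}(X)^{s/3t}\ll X^{(3\delta+\epsilon)s/3t}Y^s\ll X^{3\delta+\epsilon}Y^s$, and here $s\ge 2s-3t$. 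In both ranges one obtains $I_s(X)\ll X^{3\delta+\epsilon}(Y^s+Y^{2s-3t})$, so it remains only to establish the critical estimate.

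For this I would run the efficient congruencing iteration adapted to the self-similar structure $\mathcal{E}(p^N)=A_p+p\,\mathcal{E}(p^{N-1})$ (after normalising $X=p^N$) together with the translation--dilation invariance of the system $\sum_ix_i^j=\sum_iy_i^j$ $(j=1,2)$: at each step one conditions on the base-$p$ digits of the variables, uses the two moment congruences modulo $p$ (of degrees $1$ and $2$) to constrain the admissible digit-configurations, and after the substitution $x_i=a_i+px_i'$ passes to a mean value over $\mathcal{E}(p^{N-1})$, the solutions in which all $6t$ variables share a common last digit reproducing $I_{3t}(p^{N-1})$. The crucial arithmetic input is property (\ref{keyAP}): it is precisely this that bounds the number of admissible digit-configurations at each level, the combined weight $1+2=3$ of the linear and quadratic equations producing the exponent $3\delta$ once the per-level losses are accumulated over the $\ll\log_pX$ steps. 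This $3\delta$ is the ellipsephic counterpart of the exponent $3$ in the classical quadratic Vinogradov bound $J_{s,2}(X)\ll X^\epsilon(X^s+X^{2s-3})$, whose $s=3$ instance --- counting lattice points on a conic via representations by a binary quadratic form --- is the model for the critical estimate sought here.

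The main obstacle, and the reason this is genuinely ``the quadratic case'' and not a formal consequence of a linear statement, is the interaction between the carries in base-$p$ addition and the quadratic cross-terms $2a_ix_i'$ produced by the substitution $x_i=a_i+px_i'$. A crude column-by-column count loses a fixed power of $X$ rather than a power of $X^{\delta}$ per unit weight, so the iteration must be organised so that each descent step draws its saving from the few-representations property (\ref{keyAP}) of $A$ and not merely from the number of digit positions; as in the classical efficient congruencing for $k=2$ I expect the cross-terms to be neutralised by a conditioning step forcing a common residue --- here necessarily drawn from the few elements of $A_p$ --- before descending, and the careful bookkeeping needed to make the factor $X^{3\delta}$ emerge cleanly from this, uniformly in the carries, is where the real work lies.
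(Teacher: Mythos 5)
Your reduction of Theorem~\ref{basicthm} to the single critical estimate $I_{3t}(X)\ll X^{3\delta+\epsilon}Y^{3t}$ is correct and clean: the trivial bound for $s>3t$ and the H\"older interpolation for $s<3t$ both work, and together they recover the full statement. This parallels the paper's own reduction to $s=3t$ noted at the start of Section~\ref{Prelim}, except that the paper proves the weighted estimate of Theorem~\ref{JThm} and deduces Theorem~\ref{basicthm} by specialising to $\mathfrak{a}_x\equiv 1$, a framework it needs anyway because restriction to congruence classes naturally produces weighted exponential sums.

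The critical case is, however, only sketched, and the sketch would not survive being made precise in the form you give. Three concrete gaps: (i) the hypothesis (\ref{keyAP}) controls sums of $t$ elements of $A$, while your diagonal analysis involves all $6t$ variables at once; the paper's Lemma~\ref{diaglemma} first applies H\"older's inequality to isolate a block of $t$ variables, and only then invokes (\ref{keyAP}) digit by digit --- without that step the hypothesis does not bound the digit configurations you want. (ii) You account only for the fully diagonal contribution (all variables sharing a last digit); the hard part is the off-diagonal contribution, where variables separate $p$-adically. This is tracked in the paper through the quantities $K^h_{a,b}$ of (\ref{Kdefn}), which record the exact valuation $p^{h-1}\|(\xi-\eta)$, and through the dichotomy of Lemma~\ref{Ivvlemma}, which splits $I_{h-1,h-1}(\xi,\xi)$ into a diagonal piece feeding $I_{h,h}$ and an off-diagonal piece feeding $K^h_{h,h}$. (iii) Your iteration model --- a one-digit descent $p^N\to p^{N-1}$ repeated $\approx\log_p X$ times, with losses accumulating multiplicatively --- is not how efficient congruencing proceeds: the main iteration (Lemma~\ref{Kiter}) passes $K^h_{a,b}\to K^h_{b,2b-h+1}$, roughly doubling the conditioning modulus each time, and runs a bounded number $n=\lceil 16t/\Lambda\rceil$ of steps under an extremal hypothesis used for contradiction; only a preliminary phase of $\nu\asymp\log_p X$ applications of Lemma~\ref{Ivvlemma} is logarithmic in length, and its sole role is to dispose of solutions concentrated modulo small powers of $p$. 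A literal logarithmic single-step descent has no mechanism to prevent the off-diagonal terms from swamping the bound, so the architecture of Sections~\ref{Prelim}--\ref{PfMainThm} cannot be replaced by the heuristic you describe.
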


An estimate for the count of solutions to the Vinogradov system (\ref{VMVT}) in the case $k=2$ follows from the quadratic identity 
\begin{equation*}
(a+b-c)^2-(a^2+b^2-c^2)=2(a-c)(b-c)
\end{equation*}
and a standard bound for the divisor function, while recent progress has led to an optimal upper bound in the case of general $k$. When $k=3$, this bound was proved by Wooley in \cite{wooleyk3}, and when $k\geq 4$, by Bourgain, Demeter and Guth in \cite{BDG}, using the $l^2$-decoupling method, and subsequently by Wooley in \cite{NEC}, using the nested efficient congruencing method. These two methods are held to be, respectively, real and $p$-adic analogues of each other---see \cite{ecvl2} for further discussion of this.

We call our set (\ref{ellset})---or, interchangeably, its elements---ellipsephic. This terminology mimics the word \emph{ellips\'ephique}, used in the French mathematical literature to denote integers with missing digits---for example, by Aloui in \cite{Aloui}, and by Aloui, Mauduit and Mkaouar in \cite{AlouiMMk}. The term was coined by Mauduit (see the discussion on page 12 of \cite{colthesis}), although such integers were already studied prior to its introduction.

Writing $r=\#A_p$ for the number of permitted digits, we observe that the cases $r=0$ and $A_p= \{0\}$ are trivial, and the case $r=p$ reduces to the classical case, while when $r=1$ (and $A_p\neq \{0\}$), we see that $\mathcal{E}$ has different behaviour, with $\#\mathcal{E}(X)\approx\log_p{X}$. Consequently, we implement the restriction mentioned above that $2\leq r\leq p-1$, and note that
\begin{align*}
\#\mathcal{E}(X)\ll r^{\log_p{X}+1}=rX^{\log_p{r}},
\end{align*}
and consequently that $\mathcal{E}$ is a thin subset of the integers, in the sense that
\begin{equation*}
\lim_{X\to\infty}\frac{\#\mathcal{E}(X)}{X}=0.
\end{equation*}
We observe that ellipsephic sets have a self-similar, fractal-like structure, with the digital restrictions seen here reminiscent of those in the classical Cantor set. They bear a resemblance to certain real fractal subsets studied by \L aba and Pramanik in \cite{LabaPram}, 
 and by \L aba and Wang in \cite{Cantorrestrn}.

The bounds we obtain in this paper are heavily dependent on the additive structure of the digit set $A$, in a way which we expand on here. A generalised Sidon set, or $B_h[g]$-set, is a subset of the natural numbers in which there are at most $g$ representations of a given $n\in\mathbb{N}$ as the sum of $h$ elements of the set, where representations are counted up to permutation. The sets we are interested in, as suggested earlier in this section by the condition (\ref{keyAP}), can be considered as a further generalisation of this concept.

For $t\geq 2$ an integer, we call a set $A\subset\mathbb{N}_0$ an $E_t(\delta)$-set if (\ref{keyAP}) holds for $\delta>0$ a real number, and we call $A$ an $E_t^*$-set if (\ref{keyAP}) holds for all $\delta>0$.  We refer to a set $\mathcal{E}=\mathcal{E}^A_p$ as a $(p,t,\delta)$-ellipsephic set if $A$ is an $E_t(\delta)$-set, and as a $(p,t)^*$-ellipsephic set if $A$ is an $E_t^*$-set. We now introduce some further notation to allow us to state the more general case of our main result.
For a sequence $\bm{\mathfrak{a}}=(\mathfrak{a}_x)_{x\in\mathcal{E}}$ of complex weights, 
we let
\begin{equation*}
J_{s}(X)=J_{s,2}(X;\bm{\mathfrak{a}})=\oint \Big|\sum_{x\in\mathcal{E}(X)}\mathfrak{a}_x e(\alpha_1 x+ \alpha_2 x^2)\Big|^{2s}\,d\bm{\alpha},
\end{equation*}
where $e(z)$ is shorthand for $e^{2\pi iz}$, and $\oint$ denotes the integral over the unit square $[0,1]^2$. Then $J_{s}(X)$ counts the solutions, in positive integers $x_i,y_i\in\mathcal{E}(X)$, to the system
\begin{equation}\label{quadVin}
x_1^j+\dots+x_s^j=y_1^j+\dots+y_s^j,\quad (1\leq j\leq 2),
\end{equation}
where each solution is counted with weight $\mathfrak{a}_{\bm{x}}\overline{\mathfrak{a}_{\bm{y}}}=\mathfrak{a}_{x_1}\dots\mathfrak{a}_{x_s}\overline{\mathfrak{a}_{y_1}\dots\mathfrak{a}_{y_s}}$. We adopt the convention throughout that statements involving $\epsilon$ hold for any suitably small choice of $\epsilon>0$, and as such the exact value may change from line to line. The vector notation $\bm{x}\equiv\xi\mmod{q}$ means that $x_i\equiv\xi\mmod{q}$ for all $i$, and $\bm{x}\equiv\bm{y}\mmod{q}$ means that $x_i\equiv y_i\mmod{q}$ for all $i$.

Our main theorem provides the following upper bound for $J_{s}(X)$.
\begin{theorem}\label{JThm} 
For $t\geq 2$ an integer, $\delta>0$ a real number, and $p>2$ a prime, let $\mathcal{E}$ be a $(p,t,\delta)$-ellipsephic set and let $Y=\#\mathcal{E}(X)$. 
Then for $s\geq 3t$ we have
\begin{equation*}
J_{s}(X)\ll Y^{s-3t}X^{3\delta+\epsilon}\bigg(\sum_{x\in\mathcal{E}(X)}\left|\mathfrak{a}_x \right|^2\bigg)^s.
\end{equation*}
When $\mathcal{E}$ is a $(p,t)^*$-ellipsephic set, we therefore have
\begin{equation*}
J_{s}(X)\ll Y^{s-3t}X^{\epsilon}\bigg(\sum_{x\in\mathcal{E}(X)}\left|\mathfrak{a}_x \right|^2\bigg)^s.
\end{equation*}
\end{theorem}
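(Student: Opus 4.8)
The plan is to prove Theorem \ref{JThm} by the efficient congruencing method, adapted to respect the ellipsephic structure, and in fact to deduce it as a consequence of (a weighted version of) Theorem \ref{basicthm}. The key observation is that the digital structure of $\mathcal{E}$ interacts beautifully with congruences modulo powers of $p$: if $x \in \mathcal{E}(X)$ and we fix the bottom $c$ base-$p$ digits of $x$, then the remaining choices correspond to elements of an ellipsephic set at a smaller scale $X/p^c$. This is the substitute for the usual "Weyl differencing shifts variables into arithmetic progressions mod $p^c$" step. First I would set up the mean value $J_s(X)$ and introduce the generating function $f(\bm\alpha) = \sum_{x \in \mathcal{E}(X)} \mathfrak{a}_x e(\alpha_1 x + \alpha_2 x^2)$, together with its congruenced counterparts $f_c(\bm\alpha;\xi)$ where the sum is restricted to $x \equiv \xi \pmod{p^c}$. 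Since there are only two equations ($j = 1, 2$), the congruencing is comparatively shallow and I expect the iteration to close in a single pass rather than requiring the delicate multi-step bookkeeping of the higher-degree case.

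Next, the core analytic input: I would use the quadratic identity already quoted in the excerpt, namely $(a+b-c)^2 - (a^2+b^2-c^2) = 2(a-c)(b-c)$, to convert the system (\ref{quadVin}) with $s = 3t$ (the boundary case) into a divisor-type problem. Concretely, when $s = 3t$ one packages the $3t$ variables as $t$ triples; the quadratic equation forces, for each appropriate grouping, a product $(a - c)(b - c)$ to be determined, and a standard divisor bound ($d(n) \ll n^\epsilon$) controls the number of factorisations. The linear equation $\sum x_i = \sum y_i$ combined with the $E_t(\delta)$-property (\ref{keyAP}) of $A$ — transferred to $\mathcal{E}$ via the base-$p$ digit decomposition, which costs a factor $X^{3\delta + \epsilon}$ coming from the three "scales" of digits (this is where the exponent $3\delta$ enters) — bounds the number of ways the $t$-fold sums on each side can equal a common value $n \ll X$. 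Orthogonality, Cauchy--Schwarz, and the definition of the weights then assemble these into the claimed bound $J_{3t}(X) \ll X^{3\delta+\epsilon}\big(\sum_x |\mathfrak{a}_x|^2\big)^{3t}$, i.e. the case $s = 3t$ of the theorem with $Y^{s - 3t} = Y^0 = 1$.

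To pass from $s = 3t$ to general $s \geq 3t$, I would use the trivial pointwise bound $|f(\bm\alpha)|^2 \leq Y \sum_{x \in \mathcal{E}(X)} |\mathfrak{a}_x|^2$ wait — more precisely, by Cauchy--Schwarz $|f(\bm\alpha)| \leq Y^{1/2} \big(\sum_x |\mathfrak{a}_x|^2\big)^{1/2}$ is false pointwise; instead I would extract $2(s - 3t)$ of the $2s$ factors and bound them trivially by $|f(\bm\alpha)|^{2(s-3t)} \leq \big(\sum_{x}|\mathfrak{a}_x|\big)^{2(s-3t)}$, then apply Cauchy--Schwarz to relate $\big(\sum_x |\mathfrak{a}_x|\big)^2 \leq Y \sum_x |\mathfrak{a}_x|^2$, producing the factor $Y^{s-3t}\big(\sum_x|\mathfrak{a}_x|^2\big)^{s-3t}$, and bound the remaining $\oint |f(\bm\alpha)|^{6t}\,d\bm\alpha = J_{3t}(X)$ by the boundary case. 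Multiplying gives exactly $J_s(X) \ll Y^{s-3t} X^{3\delta+\epsilon}\big(\sum_x |\mathfrak{a}_x|^2\big)^s$, and the $E_t^*$ version follows by sending $\delta \to 0^+$ in the hypothesis, absorbing the loss into $X^\epsilon$.

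The main obstacle I anticipate is the transfer step: showing cleanly that the $E_t(\delta)$-property of the \emph{digit set} $A$ upgrades to an $E_t(3\delta + \epsilon)$-type bound for the \emph{ellipsephic set} $\mathcal{E}(X)$, uniformly in the target $n$, while keeping track of carries in base-$p$ addition. Carries mean that $\sum a_i p^i + \sum a'_i p^i = \sum a''_i p^i$ does not decouple digit-by-digit; the standard fix is to split the digit-range $[0, \log_p X]$ into a bounded number of blocks, handle each block's contribution via (\ref{keyAP}), and show the carries propagate in a controlled way, which is precisely what generates the cube in $X^{3\delta}$ (one power of $\delta$ per "layer" of the three-variable quadratic relation). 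Getting the uniformity right — so that the bound does not degrade for exceptional $n$ — and ensuring the divisor-bound step is genuinely compatible with the digital restriction (i.e. that restricting $a - c$ and $b - c$ to differences of ellipsephic numbers still permits the $X^\epsilon$ divisor estimate) will require the most care; everything else is routine Hardy--Littlewood bookkeeping.
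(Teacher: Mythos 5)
Your reduction from general $s\geq 3t$ to the boundary case $s=3t$ via trivial bounds and Hölder is exactly what the paper does, and that part is fine. But the heart of your sketch — a single-pass divisor-bound argument via the quadratic identity, with a direct ``transfer'' of the $E_t(\delta)$ property from $A$ to $\mathcal{E}$ costing $X^{3\delta}$ from ``three scales of digits'' — does not match what actually happens, and I don't believe it can be made to work as described. The quadratic identity $(a+b-c)^2-(a^2+b^2-c^2)=2(a-c)(b-c)$ cleanly handles the \emph{three-variable} system, but for $s=3t$ you propose packaging the $3t$ variables into $t$ triples; the quadratic equation $\sum_{i=1}^{3t}x_i^2=\sum_{i=1}^{3t}y_i^2$ does not decouple across such a partition, so there is no clean way to produce $t$ independent factorisation constraints. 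Nor is it clear how the divisor bound, which controls factorisations over all integers, gains anything from the restriction that the factors are differences of ellipsephic numbers: you would need an ellipsephic divisor-type estimate, which is a genuinely new ingredient you have not supplied.

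The deeper issue is your claim that the congruencing ``closes in a single pass.'' It does not. The paper's proof is an efficient-congruencing argument by contradiction: one postulates $\Lambda:=\lambda-3\delta>\epsilon_0$ and iterates Lemma~\ref{Kiter}, which gives $K_{a,b}^h\ll p^{\delta(2b-a-h+1)}(X/p^b)^{(\lambda+\iota)/2}(K_{b,2b-h+1}^h)^{1/2}$, a full $n=\lceil 16t/\Lambda\rceil$ times before the hypothesis becomes untenable; as $\Lambda\to 0^+$ this number of iterations is unbounded. The exponent $3\delta$ is not a ``three scales'' artefact but an emergent quantity from this iteration, with the per-step loss $p^{\delta(2b-a-h+1)}$ entering through the key lifting step Lemma~\ref{diaglemma}, which carefully handles the base-$p$ carry propagation you identify as the main obstacle. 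In particular, the $E_t(\delta)$ property of $A$ is never ``upgraded'' to a global additive property of $\mathcal{E}(X)$; it is applied digit by digit inside the congruencing machinery, and the quadratic structure enters via a binomial expansion producing divisibility by $p^{jb}$, not via the classical quadratic identity. So while the reduction step at the end of your proposal is correct, the core of the argument contains a genuine gap: the proposed combination of divisor bounds with the $E_t(\delta)$ hypothesis, in a single pass, does not yield a proof, and a full multi-step efficient-congruencing iteration is in fact required.
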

Note that it follows from a standard application of H\"older's inequality that for $s\leq 3t$, we have
\begin{equation*}
J_{s}(X)\ll X^{\delta s/t+\epsilon}\bigg(\sum_{x\in\mathcal{E}(X)}\left|\mathfrak{a}_x \right|^2\bigg)^s.
\end{equation*}

\begin{corollary}\label{unwgtd}
Theorem \ref{basicthm} is true.
\end{corollary}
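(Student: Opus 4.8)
The plan is to deduce Theorem~\ref{basicthm} from Theorem~\ref{JThm} by specialising the weights. Taking $\mathfrak{a}_x=1$ for every $x\in\mathcal{E}(X)$, each solution of~(\ref{quadVin}) is counted with weight $\mathfrak{a}_{\bm x}\overline{\mathfrak{a}_{\bm y}}=1$, so that $J_s(X)$ becomes exactly $I_s(X)$, while $\sum_{x\in\mathcal{E}(X)}\lvert\mathfrak{a}_x\rvert^2=Y$. Thus the whole task reduces to reading off the two theorem statements with these substitutions and checking which of the terms $Y^s$ and $Y^{2s-3t}$ is the relevant one in each range of $s$.

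First I would handle the range $s\geq 3t$. Here Theorem~\ref{JThm} applies directly and gives
\begin{equation*}
I_s(X)=J_s(X)\ll Y^{s-3t}X^{3\delta+\epsilon}Y^{s}=X^{3\delta+\epsilon}Y^{2s-3t},
\end{equation*}
and since $s\geq 3t$ gives $2s-3t\geq s\geq 0$ while $Y\geq 1$, this is $\leq X^{3\delta+\epsilon}\bigl(Y^{s}+Y^{2s-3t}\bigr)$, as required. Next I would handle the complementary range $s\leq 3t$, invoking the consequence of H\"older's inequality stated immediately after Theorem~\ref{JThm}; with unit weights it reads $I_s(X)\ll X^{\delta s/t+\epsilon}Y^{s}$. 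Because $s\leq 3t$ forces $\delta s/t\leq 3\delta$, the right-hand side is $\ll X^{3\delta+\epsilon}Y^{s}\leq X^{3\delta+\epsilon}\bigl(Y^{s}+Y^{2s-3t}\bigr)$. The two ranges cover all admissible $s$ (overlapping at $s=3t$, where both bounds read $X^{3\delta+\epsilon}Y^{s}$), so combining them yields the claimed estimate.

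There is no genuine obstacle here: all the analytic content has already been packaged into Theorem~\ref{JThm} and the H\"older remark, and the only things to verify are the bookkeeping points above --- that the exponent $\delta s/t$ coming from H\"older never exceeds the target exponent $3\delta$ on the range where it is used, and that for $s\geq 3t$ the term $Y^{2s-3t}$ dominates $Y^{s}$, so that no separate treatment of the diagonal contribution is needed.
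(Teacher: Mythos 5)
Your argument is correct and takes essentially the same approach as the paper: specialise to unit weights so that $J_s(X)=I_s(X)$ and $\sum_{x\in\mathcal{E}(X)}|\mathfrak{a}_x|^2=Y$. The paper's own proof is simply the one line ``This is the case where $\mathfrak{a}_x=1$ for all $x\in\mathcal{E}(X)$,'' leaving the split between the ranges $s\geq 3t$ and $s\leq 3t$ (and the comparison of $Y^s$ with $Y^{2s-3t}$) implicit; your write-up just makes that routine bookkeeping explicit.
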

\begin{proof}
This is the case where $\mathfrak{a}_x=1$ for all $x\in\mathcal{E}(X)$.
\end{proof}

The best upper bound which could previously be obtained for $J_{s}(X)$ is a consequence of a result of Bourgain in \cite{bourgainSchr}. Taking $\mathfrak{a}_x=0$ for $x\notin\mathcal{E}$ in that theorem yields, for $s\geq 3$,
\begin{equation*}
J_{s}(X)\ll Y^{s-3}X^{\epsilon}\bigg(\sum_{x\in\mathcal{E}(X)}\left|\mathfrak{a}_x \right|^2\bigg)^s,
\end{equation*}
so we see that a power saving in $Y$ has been obtained by accounting for the specific structure of our ellipsephic sets, rather than just their density.

The proof of Theorem \ref{JThm} uses a version of Wooley's efficient congruencing method which we outline briefly here. We begin by postulating that $J_s(X)$ is significantly larger than the bound asserted in Theorem \ref{JThm}, and proceed to derive a contradiction. We partition our variables into congruence classes modulo powers of the base $p$, and apply H\"older's inequality to restrict our variables to lie in certain common congruence classes. The binomial theorem allows us to convert our equations into congruences featuring a subset of our variables, and using their ellipsephic nature and the $E_t(\delta)$ property, we can ``lift'' solutions to these congruences, at a small cost, to diagonal solutions in which each pair of variables is mutually congruent modulo the relevant power of $p$. Iterating this process, we strengthen the congruences satisfied by these variables---this may be viewed as a ``$p$-adic concentration'' argument, since our variables become closer $p$-adically. By iterating sufficiently many times, we find that our initial assumption on $J_s(X)$ is untenable, which leads us to a contradiction.

We would also like to note the recent paper \cite{Zetal} by Chang, de Dios Pont, Greenfeld, Jamneshan, Li and Madrid, in which the authors prove a decoupling analogue of Theorem \ref{JThm} for real fractal subsets of the parabola. This provides another example of the developing connections between these specific areas of analytic number theory and harmonic analysis, as mentioned above.

We expect the full nested efficient congruencing method, as presented in \cite{NEC}, to deliver appropriate bounds for the ellipsephic version of any other system to which these techniques apply. In particular, the present author has recently obtained the equivalent result, in the case of $E_t^*$-sets, for the number of ellipsephic solutions to (\ref{VMVT}) in the general case $k\geq 3$, which appears in \cite{genellips}. Nevertheless, we believe that the level of detail included in this paper, as well as the treatment of the more general $E_t(\delta)$-sets, merits a full and separate presentation.

Theorem \ref{JThm} has potential applications to a number of Diophantine problems, most notably Waring's problem, in which we attempt to write all natural numbers as sums of a bounded number of squares of ellipsephic integers. A more tractable form of this problem is to seek solutions to
\begin{align*}
n=x_1^2+\dots+x_s^2+y^2,
\end{align*}
with $x_1,\dots, x_s\in\mathcal{E}$ and $y\in\mathbb{N}_0$, which will form a subject for our future work.

As a corollary of Theorem \ref{JThm}, we provide a lower bound on the number of integers representable in the form required by Waring's problem. We would expect to need the set $\mathcal{E}(X)$ to be sufficiently large to give any chance of being able to represent a significant proportion of the integers up to $X$, and as such we incorporate this as an extra condition in the below result.

Let $N_{s}(X)=N_{s,2}^{\mathcal{E}}(X)$ be the number of integers $n$ with $1\leq n\leq X$ which have a representation as a sum of $s$ squares of integers from $\mathcal{E}$.

\begin{corollary}
For $t\geq 2$ an integer and $p>2$ a prime, let $\mathcal{E}$ be a $(p,t,\delta)$-ellipsephic set for some $\delta>0$. Assume that $Y=\#\mathcal{E}(X)\gg X^{1/t}$. Then for $s\geq 3t$ we have
\begin{equation*}
N_{s}(X)\gg X^{1-3\delta/2-\epsilon}.
\end{equation*}
In the case where $\mathcal{E}$ is a $(p,t)^*$-ellipsephic set, we therefore have $N_{s}(X)\gg X^{1-\epsilon}$.
\end{corollary}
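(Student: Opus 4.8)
The plan is to deduce this lower bound from Theorem~\ref{JThm} by a Cauchy--Schwarz argument, the only step requiring real thought being the passage from $J_s$, which counts solutions of the full system \eqref{quadVin}, to the number of solutions of the single equation $x_1^2+\dots+x_s^2=y_1^2+\dots+y_s^2$. I would first fix a parameter $P$ of size $P\asymp X^{1/2}$, small enough that $x_1^2+\dots+x_s^2\le X$ whenever $x_1,\dots,x_s\in\mathcal{E}(P)$; the hypothesis $\#\mathcal{E}(X)\gg X^{1/t}$ is used here in the form $M:=\#\mathcal{E}(P)\gg P^{1/t}$. Let $R(n)$ denote the number of $\bm{x}\in\mathcal{E}(P)^s$ with $x_1^2+\dots+x_s^2=n$. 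Then $\sum_{n\le X}R(n)=M^s$, while $R(n)>0$ only for $n$ contributing to $N_s(X)$, so Cauchy--Schwarz gives
\begin{equation*}
M^{2s}=\Big(\sum_{n\le X}R(n)\Big)^2\le N_s(X)\sum_{n\le X}R(n)^2.
\end{equation*}

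It then remains to bound $S:=\sum_{n\le X}R(n)^2$, which is the number of solutions of $x_1^2+\dots+x_s^2=y_1^2+\dots+y_s^2$ with all variables in $\mathcal{E}(P)$. I would split $S$ according to the value of $h=x_1+\dots+x_s-y_1-\dots-y_s$, which lies in $[-sP,sP]$, writing $S=\sum_{\abs{h}\le sP}T(h)$, where $T(h)$ counts those solutions which additionally satisfy $\sum_i(x_i-y_i)=h$. With $f(\bm{\alpha})=\sum_{x\in\mathcal{E}(P)}e(\alpha_1 x+\alpha_2 x^2)$, orthogonality gives $T(h)=\oint\abs{f(\bm{\alpha})}^{2s}e(-\alpha_1 h)\,d\bm{\alpha}$, so that $T(h)\le\oint\abs{f(\bm{\alpha})}^{2s}\,d\bm{\alpha}$, which is precisely $J_s(P)$ in the unweighted case $\mathfrak{a}_x\equiv1$. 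Summing over the $O(P)$ relevant values of $h$ then yields $S\ll P\,J_s(P)$.

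Finally I would apply Theorem~\ref{JThm} with $\mathfrak{a}_x\equiv1$, so that $\sum_{x\in\mathcal{E}(P)}\abs{\mathfrak{a}_x}^2=M$; for $s\ge 3t$ this gives $J_s(P)\ll M^{2s-3t}P^{3\delta+\epsilon}$ and hence $S\ll M^{2s-3t}P^{1+3\delta+\epsilon}$. Substituting into the Cauchy--Schwarz bound and using $M^{3t}\gg P^{3}$,
\begin{equation*}
N_s(X)\ge\frac{M^{2s}}{S}\gg\frac{M^{3t}}{P^{1+3\delta+\epsilon}}\gg P^{\,2-3\delta-\epsilon}\asymp X^{\,1-3\delta/2-\epsilon},
\end{equation*}
which is the claimed bound after relabelling $\epsilon$; the $(p,t)^*$-case then follows on letting $\delta\to0$. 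The step needing the most care is the reduction $S\ll P\,J_s(P)$: it is essential that completing the single equation to the full Vinogradov system \eqref{quadVin} costs only a factor $O(P)$, which is exactly the number of possible values of $\sum_i x_i-\sum_i y_i$. The remaining bookkeeping, including matching the range $\mathcal{E}(P)$ of the variables $x_i$ with the density hypothesis on $\mathcal{E}$, is routine.
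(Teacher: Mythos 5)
Your proof is correct and follows essentially the same route as the paper: Cauchy--Schwarz on the representation function $R(n)$, a lower bound for $\sum R(n)$ from the number of $s$-tuples in $\mathcal{E}(\sqrt{X/s})$, an upper bound for $\sum R(n)^2$ in terms of $J_s(X^{1/2})$ at a cost of $O(X^{1/2})$, and then Theorem~\ref{JThm} together with the density hypothesis $Y\gg X^{1/t}$. The only difference is one of exposition: the paper asserts $\sum_{n\le X}R(n)^2\ll X^{1/2}J_s(X^{1/2})$ without comment, whereas you justify it by splitting over the $O(P)$ values of $h=\sum(x_i-y_i)$ and bounding each $T(h)=\oint|f|^{2s}e(-\alpha_1 h)\,d\bm{\alpha}\le J_s(P)$ by the triangle inequality, which is exactly the mechanism behind the paper's inequality.
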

\begin{proof}
Using Cauchy's inequality, and writing $R(n)=R_{s,2}^{\mathcal{E}}(n)$ for the number of representations of an integer $n$ as a sum of $s$ squares of integers from $\mathcal{E}$, we have
\begin{align}\label{Rnineq}
\bigg(\sum_{1\leq n\leq X}R(n)\bigg)^2 &\leq \bigg(\sum_{\substack{1\leq n\leq X\\ R(n)>0}}1\bigg)\bigg(\sum_{1\leq n\leq X}R(n)^2\bigg)\nonumber\\
&=N_{s}(X)\sum_{1\leq n\leq X}R(n)^2.
\end{align}
We note that
\begin{align*}
\bigg(\sum_{1\leq n\leq X}R(n)\bigg)^2 &\geq \big(\#\mathcal{E}(\sqrt{X/s})^s\big)^2\\
&\gg Y^{s},
\end{align*}
and, using Theorem \ref{JThm}, that
\begin{align*}
\sum_{1\leq n\leq X}R(n)^2&\ll X^{1/2}J_s(X^{1/2})\\
&\ll X^{1/2}(Y^{1/2})^{2s-3t}(X^{1/2})^{3\delta+\epsilon}\\
&= Y^{s-3t/2}(X^{1/2})^{1+3\delta+\epsilon}.
\end{align*}
Combining these bounds with (\ref{Rnineq}), we see that
\begin{equation*}
N_{s}(X)\gg Y^{3t/2}(X^{-1/2})^{1+3\delta+\epsilon},
\end{equation*}
and our additional assumption on the size of $Y$ allows us to conclude that
\begin{equation*}
N_{s}(X)\gg X^{1-3\delta/2-\epsilon},
\end{equation*}
as required.
\end{proof}

In Section \ref{Prelim} of this paper, we provide a series of preliminary results which form the basis of our iteration process, and in Section \ref{PfMainThm} we complete the proof of Theorem \ref{JThm}.

This paper is based on work appearing in the author's PhD thesis \cite{mythesis} at the University of Bristol, and supported by EPSRC Doctoral Training Partnership EP/M507994/1. During the writing process, she was also supported by the Heilbronn Institute for Mathematical Research, and by the Knut and Alice Wallenberg Foundation (KAW 2018.0362). She would like to thank Trevor Wooley for his supervision and for suggesting this line of research, and Julia Brandes for helpful discussions.

\section{Preliminaries}\label{Prelim}
We recall that we are interested in the integral
\begin{align*}
J(X)=J_{s,2}(X;\bm{\mathfrak{a}})=\oint\bigg|\sum_{x\in\mathcal{E}(X)}\mathfrak{a}_x e(\alpha_1 x+ \alpha_2 x^2)\bigg|^{2s}\,d\bm{\alpha},
\end{align*}
which counts the number of solutions to (\ref{quadVin}) where each solution is counted with weight $\mathfrak{a}_{\bm{x}}\overline{\mathfrak{a}_{\bm{y}}}=\mathfrak{a}_{x_1}\dots\mathfrak{a}_{x_s}\overline{\mathfrak{a}_{y_1}\dots\mathfrak{a}_{y_s}}$. 
We first observe that the case $s>3t$ of Theorem \ref{JThm} follows directly from the case $s=3t$, and so we work only in this latter case throughout.
We also note that it suffices to prove Theorem \ref{JThm} for $X$ a power of $p$ because, for $p^{C-1}<X<p^C$, we then have
\begin{align*}
J(X)\ll J(p^C) \ll (pX)^{3\delta+\epsilon}\bigg(\sum_{x\in\mathcal{E}(p^C)}\left|\mathfrak{a}_x \right|^2\bigg)^s
\end{align*}
for any choice of $\bm{\mathfrak{a}}$, and so
\begin{align*}
J(X) \ll X^{3\delta+\epsilon}\bigg(\sum_{x\in\mathcal{E}(X)}\left|\mathfrak{a}_x \right|^2\bigg)^s,
\end{align*}
since we may assume that $\mathfrak{a}_x=0$ for $x>X$. 

We apply the following normalisation. Let
\begin{equation*}
\rho_0=\bigg(\sum_{x\in\mathcal{E}(X)}\left|\mathfrak{a}_x \right|^2\bigg)^{1/2},
\end{equation*}
and for any $\bm{\alpha}\in [0,1]^2$, let
\begin{equation*}
f(\bm{\alpha})=f(\bm{\alpha};\bm{\mathfrak{a}})=\rho_0^{-1}\sum_{x\in\mathcal{E}(X)}\mathfrak{a}_x e(\alpha_1 x+ \alpha_2 x^2),
\end{equation*}
and define the normalised mean value
\begin{align*}
\mathfrak{J}(X)=\mathfrak{J}_{s,2}(X;\bm{\mathfrak{a}})=\oint\big|f(\bm{\alpha};\bm{\mathfrak{a}})\big|^{2s}\,d\bm{\alpha}=\rho_0^{-2s}J(X).
\end{align*}
Note that this normalisation allows us to assume that $\abs{\mathfrak{a}_x}\leq 1$ for all $x\in\mathcal{E}$. We may also restrict ourselves to the situation in which our weights are real and non-negative, as follows. Let $\mathfrak{a}_x=\mathfrak{b}^+_x-\mathfrak{b}^-_x+i\mathfrak{c}^+_x-i\mathfrak{c}^-_x$, where $\mathfrak{b}^+_x,\mathfrak{b}^-_x,\mathfrak{c}^+_x$ and $\mathfrak{c}^-_x$ are non-negative real numbers, with at most one of $\mathfrak{b}^+_x$ and $\mathfrak{b}^-_x$ non-zero, and at most one of $\mathfrak{c}^+_x$ and $\mathfrak{c}^-_x$ non-zero. Write
\begin{align*}
&g_1(\bm{\alpha})=\sum_{x\in\mathcal{E}(X)}\mathfrak{b}^{+}_x e(\alpha_1 x+ \alpha_2 x^2), &g_2(\bm{\alpha})=\sum_{x\in\mathcal{E}(X)}\mathfrak{b}^{-}_x e(\alpha_1 x+ \alpha_2 x^2),\\
&g_3(\bm{\alpha})=\sum_{x\in\mathcal{E}(X)}\mathfrak{c}^{+}_x e(\alpha_1 x+ \alpha_2 x^2),&g_4(\bm{\alpha})=\sum_{x\in\mathcal{E}(X)}\mathfrak{c}^{-}_x e(\alpha_1 x+ \alpha_2 x^2),
\end{align*}
and observe that
\begin{align*}
\sum_{x\in\mathcal{E}(X)}\mathfrak{a}_x e(\alpha_1 x+ \alpha_2 x^2)=g_1(\bm{\alpha})-g_2(\bm{\alpha})+ig_3(\bm{\alpha})-ig_4(\bm{\alpha})=\sum_{j=1}^4 \epsilon_j g_j(\bm{\alpha}),
\end{align*}
where we have chosen $\epsilon_j\in\{\pm 1,\pm i\}$ appropriately. By H\"older's inequality, we now split up the integrals we are interested in into the parts corresponding to each of these weights, to see that
\begin{align*}
\oint\bigg|\sum_{x\in\mathcal{E}(X)}\mathfrak{a}_x e(\alpha_1 x+ \alpha_2 x^2)\bigg|^{2s}\,d\bm{\alpha}&=\oint\bigg|\sum_{j=1}^4 \epsilon_j g_j(\bm{\alpha})\bigg|^{2s}\,d\bm{\alpha}\ll\max_{j}\oint \big| g_j(\bm{\alpha})\big|^{2s}\,d\bm{\alpha},
\end{align*}
and that since $\left|\mathfrak{b}^{\pm}_x\right|,\left|\mathfrak{c}^{\pm}_x\right|\leq \left|\mathfrak{a}_x\right|$, we obtain the required bounds for general weights from those for real, non-negative weights as claimed. We let
\begin{equation*}
\mathbb{D}=\{\bm{\mathfrak{a}}\mid \mathfrak{a}_x\in [0,1] \mbox{ for all } x\in\mathcal{E} \},
\end{equation*}
and from now on we work with $\bm{\mathfrak{a}}\in\mathbb{D}$.

With the above normalisation, we see that an estimate of the desired form
\begin{equation*}
J(X)\ll X^{\Delta}\bigg(\sum_{x\in\mathcal{E}(X)}\left|\mathfrak{a}_x \right|^2\bigg)^s,
\end{equation*}
for some $\Delta>0$, follows directly from one of the form
\begin{equation*}
\mathfrak{J}(X)\ll X^{\Delta}.
\end{equation*}

We define
\begin{equation*}
\lambda=\sup_{\bm{\mathfrak{a}}\in\mathbb{D}}\limsup_{X\to\infty}\frac{\log{\mathfrak{J}(X;\bm{\mathfrak{a}})}}{\log{X}}.
\end{equation*}
An application of the Cauchy--Schwarz inequality gives us the trivial bound $\lambda\leq s$. Taking into account the expected value of $\lambda$, we define $\Lambda=\lambda -3\delta$ for ease of notation.

We introduce a series of interdependent constants which come into play during the proof of Theorem \ref{JThm} and the results of this section.
Let $\epsilon_0>0$, and suppose $\Lambda>\epsilon_0$. This is the assumption which we ultimately contradict in Section \ref{PfMainThm}.

Let $n=\lceil 16t/\Lambda\rceil$, which will be the number of iterations of the main process in Section \ref{PfMainThm}, and note that the existence and size of $n$ is dependent on our assumption that $\Lambda$ is bounded away from zero. While we would usually expect it to be significantly larger, we certainly have $n\geq 5$. Let $\iota=\lambda/2^{2n+3}$, and observe that by the definition of $\lambda$, there exists a sequence $(X_m)_{m=1}^{\infty}$ tending to infinity with the property that for some $\bm{\mathfrak{a}}\in\mathbb{D}$, and for large enough $m$, we have
\begin{align*}
\mathfrak{J}(X_m;\bm{\mathfrak{a}})>X_m^{\lambda-\iota}.
\end{align*}
Henceforth, we work with a choice of $\bm{\mathfrak{a}}\in\mathbb{D}$ satisfying this condition. In addition, for any $\bm{\mathfrak{b}}\in\mathbb{D}$, we have
\begin{equation*}
\mathfrak{J}(X;\bm{\mathfrak{b}})\ll X^{\lambda+\iota}.
\end{equation*}

Suppose that $X = p^B$, where $B\in\mathbb{N}$ is a large parameter which satisfies $B\geq 2^{n+3}$ and also ensures that $X$ is sufficiently large with regards to the sequence $(X_m)$. The proof of our main theorem features $\nu$ preliminary steps to handle solutions in which variables are congruent modulo small powers of $p$, as well as an initialisation step of size $p^u$, where $\nu$ and $u$ are large in some respects, but small in relation to $B$. Specifically, let $\nu=\lceil B/2^{2n+2} \rceil$ and $u=\lceil B/2^{n+2}\rceil$. We record two further bounds which will come into play in Section \ref{PfMainThm}. We have
\begin{align}\label{bntheta}
2^n (u +1) +\nu -1 &\leq B/4+2^{n+1}+B/{2^{2n+2}} \nonumber\\
&\leq B/2+B/{2^{2n+2}} < B,
\end{align}
and
\begin{align}\label{iotabd}
2tu-\nu&\geq 2tB/2^{n+2}-B/2^{2n+2}-1 \nonumber\\
&\geq (2^{n+1}t-1-2^{n-1})B/2^{2n+2} \nonumber\\
&> \lambda B/2^{2n+2} = 2\iota B.
\end{align}

Our work is heavily dependent on the partition of our variables into congruence classes modulo various powers of the base prime $p$, and we therefore wish to define the restriction of $f(\bm{\alpha})$ to such classes. For $a\in\mathbb{N}$ and $\xi\in\mathcal{E}(p^a)$, let
\begin{equation*}
\rho_a(\xi)=\bigg(\sum_{\substack{x\in\mathcal{E}(X)\\ x\equiv\xi\mmod{p^a}}}\mathfrak{a}_x^2\bigg)^{1/2}
\end{equation*}
and
\begin{equation*}
f_a(\bm{\alpha},\xi)=\rho_a(\xi)^{-1}\sum_{\substack{x\in\mathcal{E}(X)\\ x\equiv\xi\mmod{p^a}}}\mathfrak{a}_x e(\alpha_1 x+ \alpha_2 x^2).
\end{equation*}
For convenience, we let $\rho_0(\xi)=\rho_0$ and $f_0(\bm{\alpha},\xi)=f(\bm{\alpha})$ for any $\xi$. 
We observe that for any $a\in\mathbb{N}$, we have
\begin{equation}\label{sumrhoa}
\sum_{\xi\in\mathcal{E}(p^a)}\rho_a(\xi)^2=\rho_0^2,
\end{equation}
and more generally, for $a,b\in\mathbb{N}$ with $a\leq b$,
\begin{equation*}
\sum_{\substack{\xi'\in\mathcal{E}(p^b)\\\xi'\equiv\xi\mmod{p^a}}}\rho_b(\xi')^2=\rho_a(\xi)^2.
\end{equation*}

Our first lemma provides a useful upper bound required for completion of the proof of Theorem \ref{JThm}. 
\begin{lemma}\label{atheta}
For $a\in\mathbb{N}$ with $p^a<X$, we have
\begin{equation*}
\oint\left|f_a(\bm{\alpha},\xi)\right|^{6t}\,d\bm{\alpha}\ll (X/p^a)^{\lambda+\iota}.
\end{equation*}
\end{lemma}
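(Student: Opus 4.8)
\emph{Proof sketch.} The plan is to identify $f_a(\bm{\alpha},\xi)$, after a linear change of variables in $\bm{\alpha}$, with the normalised generating function for the same mean value at the smaller scale $X/p^a$, and then to invoke the uniform estimate $\mathfrak{J}(Z;\bm{\mathfrak{b}})\ll Z^{\lambda+\iota}$, valid for all $\bm{\mathfrak{b}}\in\mathbb{D}$, recorded above. We may assume $\rho_a(\xi)>0$, since otherwise $f_a(\bm{\alpha},\xi)$ is taken to be zero and there is nothing to prove. First I would parametrise the variables congruent to $\xi$: every $x\in\mathcal{E}(X)$ with $x\equiv\xi\mmod{p^a}$ is uniquely of the form $x=\xi+p^ay$, where $y\in\mathcal{E}(X/p^a)$ is the ellipsephic integer formed by the base-$p$ digits of $x$ in positions $\geq a$, and conversely each such $y$ gives rise to such an $x$. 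Expanding
\[
\alpha_1 x+\alpha_2 x^2=(\alpha_1\xi+\alpha_2\xi^2)+p^a(\alpha_1+2\alpha_2\xi)\,y+p^{2a}\alpha_2\,y^2,
\]
and putting $\beta_1=p^a(\alpha_1+2\alpha_2\xi)$, $\beta_2=p^{2a}\alpha_2$, and $\mathfrak{b}_y=\mathfrak{a}_{\xi+p^ay}$ for $y\in\mathcal{E}(X/p^a)$, we have $\bm{\mathfrak{b}}\in\mathbb{D}$ and $\sum_y\mathfrak{b}_y^2=\rho_a(\xi)^2$, so that
\[
\abs{f_a(\bm{\alpha},\xi)}=\Big|\rho_a(\xi)^{-1}\sum_{y\in\mathcal{E}(X/p^a)}\mathfrak{b}_y\,e(\beta_1 y+\beta_2 y^2)\Big|=\abs{f(\bm{\beta};\bm{\mathfrak{b}})},
\]
where $f(\,\cdot\,;\bm{\mathfrak{b}})$ denotes the normalised generating function for the problem at scale $X/p^a$ with weights $\bm{\mathfrak{b}}$.

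Next I would carry out the change of variables in the integral. Fixing $\alpha_2\in[0,1]$, so that $\beta_2=p^{2a}\alpha_2$ is fixed, as $\alpha_1$ runs over $[0,1]$ the variable $\beta_1$ runs over an interval of length $p^a$; since $\abs{f(\bm{\beta};\bm{\mathfrak{b}})}$ is periodic with period $1$ in $\beta_1$ and $p^a$ is an integer, the inner integral in $\alpha_1$ equals $\int_0^1\abs{f((\beta_1,p^{2a}\alpha_2);\bm{\mathfrak{b}})}^{6t}\,d\beta_1$. Then substituting $\beta_2=p^{2a}\alpha_2$ and using periodicity of period $1$ in $\beta_2$ together with the integrality of $p^{2a}$ collapses the remaining integral in the same way, giving
\[
\oint\abs{f_a(\bm{\alpha},\xi)}^{6t}\,d\bm{\alpha}=\oint\abs{f(\bm{\beta};\bm{\mathfrak{b}})}^{6t}\,d\bm{\beta}=\mathfrak{J}(X/p^a;\bm{\mathfrak{b}}).
\]

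Finally, since $\bm{\mathfrak{b}}\in\mathbb{D}$ and $X/p^a=p^{B-a}\geq 1$, applying the recorded bound $\mathfrak{J}(Z;\bm{\mathfrak{b}})\ll Z^{\lambda+\iota}$ at $Z=X/p^a$ gives $\oint\abs{f_a(\bm{\alpha},\xi)}^{6t}\,d\bm{\alpha}\ll(X/p^a)^{\lambda+\iota}$, as required. I do not anticipate any serious obstacle: the only point calling for care is the bookkeeping in the change of variables, namely that $\beta_1$ and $\beta_2$ are integer multiples of $\alpha_1$ and $\alpha_2$ respectively (up to an $\alpha_1$-independent translation in the case of $\beta_1$), so that the $1$-periodicity of the exponential sum permits replacing integration over the dilated ranges by integration over the unit square; beyond this the argument is a direct reduction to the already-recorded bound.
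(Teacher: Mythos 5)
Your argument is correct and is essentially the paper's own, just phrased on the Fourier-analytic side rather than the counting side: the paper interprets $\oint|f_a|^{6t}\,d\bm{\alpha}$ by orthogonality as counting weighted solutions, substitutes $x=p^az+\xi$, and recognises the resulting count as $\mathfrak{J}\big((X-\xi)/p^a;\bm{\mathfrak{b}}\big)$, whereas you perform the identical substitution directly in the exponential sum and carry out the linear change of variables in $\bm{\alpha}$ using periodicity. Both routes produce $\mathfrak{J}(X/p^a;\bm{\mathfrak{b}})$ for the same reweighted sequence $\bm{\mathfrak{b}}\in\mathbb{D}$ and then invoke the uniform bound $\mathfrak{J}(Z;\bm{\mathfrak{b}})\ll Z^{\lambda+\iota}$, so this is the same proof in a different notation.
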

\begin{proof}
The above integral counts solutions to the system
\begin{equation*}
\sum_{i=1}^{3t}(x_i^2-y_i^2)=0=\sum_{i=1}^{3t}(x_i-y_i)
\end{equation*}
with $x_i,y_i\in\mathcal{E}(X)$ for $1\leq i\leq 3t$ and $\bm{x}\equiv\bm{y}\equiv\xi\mmod{p^a}$, where solutions are counted with weight $\mathfrak{a}_{\bm{x}}\mathfrak{a}_{\bm{y}}\rho_a(\xi)^{-6t}$. Writing $x_i=p^az_i+\xi$ and $y_i=p^aw_i+\xi$ for $1\leq i\leq 3t$, and defining a new set of weights $\mathfrak{b}_z=\rho_a(\xi)^{-1}\mathfrak{a}_{p^az+\xi}$, we can reinterpret the above system in the form
\begin{equation*}
\sum_{i=1}^{3t}(z_i^2-w_i^2)=0=\sum_{i=1}^{3t}(z_i-w_i)
\end{equation*}
with $z_i,w_i\in\mathcal{E}\big((X-\xi)/p^a\big)$ for $1\leq i\leq 3t$ and solutions counted with weight $\mathfrak{b}_{\bm{z}}\mathfrak{b}_{\bm{w}}$. By definition, this is $\mathfrak{J}\big((X-\xi)/p^a;\bm{\mathfrak{b}}\big)$, and consequently we have
\begin{align*}
\oint\left|f_a(\bm{\alpha},\xi)\right|^{6t}\,d\bm{\alpha}&\ll \mathfrak{J}(X/p^a;\bm{\mathfrak{b}})
\ll (X/p^a)^{\lambda+\iota}. \qedhere
\end{align*}
\end{proof}

We want to count solutions to congruences modulo some power ${p^c}$ in the way that we count solutions to equations, via orthogonality, and as such, we make use of Wooley's notation
\begin{equation*}
\oint_{p^c}F(\alpha)\,d\alpha = p^{-c}\sum_{1\leq u\leq p^c}F(u/p^c),
\end{equation*}
and observe that $\oint_{p^c}\left|f(\bm{\alpha})\right|^{2s}\,d\bm{\alpha}$ counts the number of solutions to the system
\begin{equation*}
\sum_{i=1}^s(x_i^j-y_i^j)\equiv 0\mmod{p^c},\quad(j=1,2)
\end{equation*}
with $\bm{x},\bm{y}\in\mathcal{E}(X)^s$, weighted by $\mathfrak{a}_{\bm{x}}\mathfrak{a}_{\bm{y}}\rho_0^{-2s}$.

The next lemma provides the key ``lifting'' step of the process, in which we make use of the $E_t(\delta)$ property of our digit set to raise the power of $p$ used in our congruences. In preparation for this, for $c,d\in\mathbb{N}_0$ with $c\leq d$, weights $\bm{\mathfrak{b}}=(\mathfrak{b}_x)_{x\in\mathcal{E}}$ with $\abs{\mathfrak{b}_x}\leq 1$ for all $x\in\mathcal{E}$, and $\bm{z}\in\mathcal{E}(p^{c})^t$, we define
\begin{equation*}
G_{c,d}(\bm{z})=G_{c,d}(\bm{z},\bm{\mathfrak{b}})=\oint_{p^{d}}\bigg|\sum_{\substack{\bm{x}\in\mathcal{E}(X)^t\\ \bm{x}\equiv\bm{z}\mmod{p^{c}}}}\mathfrak{b}_{\bm{x}}e\big(\beta(x_1+\dots+x_t)\big)\bigg|^2\,d\beta,
\end{equation*}
which counts solutions to the congruence
\begin{equation}\label{dcong}
\sum_{i=1}^t x_i\equiv\sum_{i=1}^t y_i\mmod{p^d}
\end{equation}
with $\bm{x},\bm{y}\in\mathcal{E}(X)^t$ and $\bm{x}\equiv\bm{y}\equiv\bm{z}\mmod{p^c}$, with weight $\mathfrak{b}_{\bm{x}}\overline{\mathfrak{b}_{\bm{y}}}$.

We now show that, up to a small cost, the number of such solutions is essentially controlled by the case in which $\bm{x}\equiv\bm{y}\mmod{p^d}$.

\begin{lemma}\label{diaglemma}
We have
\begin{equation*}
G_{c,d}(\bm{z})\ll p^{\delta(d-c)}\sum_{\substack{\bm{u}\in\mathcal{E}(p^d)^t\\ \bm{u}\equiv\bm{z}\mmod{p^c}}}\Big|\sum_{\substack{\bm{x}\in\mathcal{E}(X)^t\\ \bm{x}\equiv\bm{u}\mmod{p^d}}}\mathfrak{b}_{\bm{x}}\,\Big|^2.
\end{equation*}
\end{lemma}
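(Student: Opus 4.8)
The plan is to open up the definition of $G_{c,d}(\bm{z})$ as a count of solutions to the congruence (\ref{dcong}) and then to group the solutions according to the common residue class of $\bm{x}$ (equivalently $\bm{y}$, since the two sides are congruent) modulo $p^d$. More precisely, write each $x_i = p^c w_i + z_i$ and $y_i = p^c v_i + z_i$ with $w_i, v_i \in \mathcal{E}\big((X-z_i)/p^c\big)$; then the congruence $\sum_i x_i \equiv \sum_i y_i \mmod{p^d}$ becomes $p^c \sum_i (w_i - v_i) \equiv 0 \mmod{p^d}$, i.e.\ $\sum_i (w_i - v_i) \equiv 0 \mmod{p^{d-c}}$. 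Because the $w_i$ and $v_i$ are ellipsephic, their base-$p$ digits lie in $A$, and the sum $\sum_i w_i \mmod{p^{d-c}}$ depends only on the bottom $d-c$ digits of each $w_i$, each of which is a sum of $t$ elements of $A$ in each digit position. This is exactly where the $E_t(\delta)$ hypothesis (\ref{keyAP}) enters: for a fixed target value $m$ of $\sum_i w_i \mmod{p^{d-c}}$, the number of ways to realise $m$ as $\sum_{i=1}^t a_i$ with $a_i \in A$ is $\ll m^\delta \ll (p^{d-c})^\delta = p^{\delta(d-c)}$.

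Concretely, I would partition the pairs $(\bm{x},\bm{y})$ counted by $G_{c,d}(\bm{z})$ according to the value $\bm{u} \in \mathcal{E}(p^d)^t$ with $\bm{u} \equiv \bm{z} \mmod{p^c}$ such that $\bm{x} \equiv \bm{u} \mmod{p^d}$. For the diagonal contribution, where $\bm{y} \equiv \bm{u} \mmod{p^d}$ as well, the total is exactly $\sum_{\bm{u}} \big|\sum_{\bm{x} \equiv \bm{u}} \mathfrak{b}_{\bm{x}}\big|^2$, which is the right-hand side without the $p^{\delta(d-c)}$ factor. For the off-diagonal contribution, $\bm{y}$ lies in a residue class $\bm{u}' \neq \bm{u}$ modulo $p^d$ with $\bm{u}' \equiv \bm{z} \mmod{p^c}$ and $\sum_i u_i \equiv \sum_i u_i' \mmod{p^d}$; the key point is that once $\bm{u}$ is fixed, the number of admissible $\bm{u}'$ is controlled by (\ref{keyAP}). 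Indeed, writing $u_i = p^c u_i'' + z_i$ with $u_i'' \in \mathcal{E}(p^{d-c})$ (and similarly for $\bm{u}'$), the constraint is $\sum_i u_i'' \equiv \sum_i u_i''' \mmod{p^{d-c}}$, and for each fixed value $n = \sum_i u_i''$ (with $0 \le n < t p^{d-c}$), the number of $t$-tuples $\bm{u}'''$ of ellipsephic integers below $p^{d-c}$ summing to $n$ modulo $p^{d-c}$ is, digit-by-digit, $\ll (p^{d-c})^{\delta+\epsilon}$ by (\ref{keyAP})—and then summing $\big|\sum_{\bm{x} \equiv \bm{u}} \mathfrak{b}_{\bm{x}}\big| \cdot \big|\sum_{\bm{y} \equiv \bm{u}'} \mathfrak{b}_{\bm{y}}\big|$ over the at most $p^{\delta(d-c)}$ relevant $\bm{u}'$ and applying the elementary inequality $2|ab| \le |a|^2 + |b|^2$ collapses everything back onto the diagonal sum, at a cost of the factor $p^{\delta(d-c)}$.

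The step I expect to require the most care is the digit-by-digit bookkeeping in the off-diagonal count: one must check that the number of $t$-tuples in $\mathcal{E}(p^{d-c})^t$ with a prescribed sum modulo $p^{d-c}$ is genuinely $\ll p^{\delta(d-c)}$ and not something larger coming from carries propagating between digit positions. The cleanest way to handle this is to note that specifying $\sum_i u_i'' \mmod{p^{d-c}}$ together with the carry pattern determines, in each of the $d-c$ digit positions, a value $n_j$ which must be written as a sum of $t$ digits from $A$ (up to a bounded carry correction); applying (\ref{keyAP}) in each position and multiplying gives $\prod_j n_j^\delta \le \big(\prod_j p\big)^{\delta \cdot (\text{bounded})}$, but one has to be slightly careful that the total exponent comes out as $\delta(d-c)$ rather than $t\delta(d-c)$ or similar. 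In fact the slicker route, avoiding carries entirely, is to bound the number of off-diagonal partners directly: the difference $\bm{u} - \bm{u}'$ (as residues mod $p^d$, lifted to $[0,p^d)$) satisfies a single linear congruence, and once this difference is fixed the pair $(\bm{u},\bm{u}')$ is determined up to translation, so the count reduces to bounding the number of admissible difference vectors, which is where (\ref{keyAP}) with parameter $t$ and modulus $p^{d-c}$ delivers exactly the factor $p^{\delta(d-c)}$. Everything else—expanding the squares, applying orthogonality over $[0,1]$ for the $\oint_{p^d}$, and the final $2|ab| \le |a|^2 + |b|^2$—is routine.
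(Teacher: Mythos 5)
Your overall strategy is genuinely different from the paper's: you partition by residue class $\bm{u}\bmod p^d$, try to bound directly the number of off-diagonal partners $\bm{u}'$ for a fixed $\bm{u}$, and collapse to the diagonal with $2|ab|\le|a|^2+|b|^2$. The paper instead partitions by carry pattern $\bm{\lambda}$, encodes each carry class via an integral $\oint e(-\bm{\gamma}\cdot\bm{\lambda'})\cdots\,d\bm{\gamma}$, and exploits the positivity of $\big|\sum_{\underline{\bm{u}}}\mathfrak{B}(\bm{u})e(\phi_{\bm{u}}(\bm{\gamma}))\big|^2$ to drop the phase and majorize every carry class by the zero-carry class. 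That Fourier step is the crux: it replaces the carry-laden congruence $\sum_i x_i\equiv\sum_i y_i\pmod{p^d}$ with the carry-free digit-wise condition $\sum_i u_i^{(r)}=\sum_i v_i^{(r)}$ for $c\le r\le d-1$, after which the $E_t(\delta)$ hypothesis and Cauchy apply one digit at a time via $\#\mathcal{A}_t(\bm{n})\ll\prod_r n_r^{\delta}$.

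The gap in your version is exactly at the point you flag yourself. Your key claim — that the number of $t$-tuples $\bm{u}'''$ of ellipsephic integers below $p^{d-c}$ summing to a prescribed value modulo $p^{d-c}$ is $\ll(p^{d-c})^{\delta+\epsilon}$ — does not follow from (\ref{keyAP}). The hypothesis (\ref{keyAP}) controls representations of an integer as a sum of $t$ \emph{single digits} from $A$, not as a sum of $t$ multi-digit ellipsephic integers. Transferring it across several digit positions introduces carries, and a crude accounting multiplies the bound by the number of possible carry patterns, which is exponential in $d-c$; that is precisely what the paper's Fourier positivity step is designed to avoid, and your argument has no substitute for it. Your proposed ``slicker route'' does not repair this: counting admissible difference vectors $\bm{u}-\bm{u}'$ modulo $p^{d-c}$ is the same counting problem as counting the partners $\bm{u}'$ themselves, so it is circular rather than an independent application of (\ref{keyAP}). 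To make your decomposition-by-residue-class approach work, you would need to prove a carry-free version of the counting bound first — which is, in effect, a reformulation of the paper's Lemma~\ref{diaglemma} rather than a new proof of it.
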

\begin{proof}
 For $1\leq i\leq t$, let 
 \begin{equation*}
 x_i=z_i +\sum_{r\geq c} x_i^{(r)}p^r
 \end{equation*}
 and
 \begin{equation*}
 y_i=z_i +\sum_{r\geq c} y_i^{(r)}p^r,
 \end{equation*}
with $x_i^{(r)},y_i^{(r)}\in A_p$ for $1\leq i\leq t$ and $r\geq c$. We bound the number of solutions to (\ref{dcong}) by considering each base $p$ digit in turn. Let 
\begin{equation*}
\mathcal{A}_t(h)=\bigg\{\bm{u}\in A_p^{t}\biggm| \sum_{i=1}^t u_i=h\bigg\},
\end{equation*}
and
\begin{equation*}
\widetilde{\mathcal{A}}_t(h)=\bigg\{(\bm{u},\bm{v})\in A_p^{2t}\biggm| \sum_{i=1}^t (u_i-v_i)=h\bigg\}.
\end{equation*}
Summing the lowest digits which interest us (namely, those corresponding to the $p^c$ term in the base $p$ expansion of our variables), we see that a solution of (\ref{dcong}) satisfies
\begin{equation*}
(\bm{x}^{(c)},\bm{y}^{(c)})\in\widetilde{\mathcal{A}}_t(\lambda_c p)
\end{equation*}
for some $1-t\leq \lambda_c\leq t-1$. Accounting for this carry-over, and moving on to the next highest digits, we then see that
\begin{equation*}
(\bm{x}^{(c+1)},\bm{y}^{(c+1)})\in\widetilde{\mathcal{A}}_t(\lambda_{c+1} p-\lambda_c)
\end{equation*}
for some $1-t\leq \lambda_{c+1}\leq t-1$.
Continuing this process, and setting $\lambda_{c-1}=0$ for convenience, we obtain the system
\begin{equation*}
(\bm{x}^{(r)},\bm{y}^{(r)})\in\widetilde{\mathcal{A}}_t(\lambda_r p-\lambda_{r-1}),\quad (c\leq r\leq d-1).
\end{equation*}
For brevity, we use the notation $\underline{\bm{u}}$ to denote the tuple $(\bm{u}^{(c)},\dots,\bm{u}^{(d-1)})$---this represents a regrouping of our variables by digit---and similarly we use $(\underline{\bm{u}},\underline{\bm{v}})$ for $\Big((\bm{u}^{(c)},\bm{v}^{(c)}),\dots,(\bm{u}^{(d-1)},\bm{v}^{(d-1)})\Big)$.

We write
\begin{equation*}
\mathcal{A}_t(\bm{h})=\bigg\{\underline{\bm{u}}\in A_p^{t(d-c)}\biggm| \bm{u}^{(r)}\in{\mathcal{A}}_t(h_r)\mbox{ for }c\leq r\leq d-1\bigg\}
\end{equation*}
and
\begin{equation*}
\widetilde{\mathcal{A}}_t(\bm{h})=\bigg\{(\underline{\bm{u}},\underline{\bm{v}})\in A_p^{2t(d-c)}\biggm| (\bm{u}^{(r)},\bm{v}^{(r)})\in\widetilde{\mathcal{A}}_t(h_r)\mbox{ for }c\leq r\leq d-1\bigg\},
\end{equation*}
and observe that these are the sets of all possible variables with given digit sums. By convention, we suppose that for any $\underline{\bm{u}}=(\bm{u}^{(c)},\dots,\bm{u}^{(d-1)})\in\mathcal{A}_t(\bm{h})$, we have $u_i=z_i+\sum_{c\leq r\leq d-1}u_i^{(r)}p^r$ and write $\bm{u}=(u_1,\dots,u_t)$, and similarly for $(\underline{\bm{u}},\underline{\bm{v}})\in\widetilde{\mathcal{A}}_t(\bm{h})$.

For $\bm{\lambda}=(\lambda_c,\dots,\lambda_{d-1})\in\{1-t,\dots,t-1\}^{d-c}$, we write 
\begin{equation*}
\bm{\lambda'}=(\lambda_c p-\lambda_{c-1},\dots,\lambda_{d-1} p-\lambda_{d-2}). 
\end{equation*}
We are now in a position to observe that
\begin{align*}
G_{c,d}(\bm{z})= \sum_{\bm{\lambda}\in\{1-t,\dots,t-1\}^{d-c}} \sum_{(\underline{\bm{u}},\underline{\bm{v}})\in\widetilde{\mathcal{A}}_t(\bm{\lambda'})}\sum_{\substack{\bm{x},\bm{y}\in\mathcal{E}(X)^t\\ (\bm{x},\bm{y})\equiv(\bm{u},\bm{v})\mmod{p^d}}}\mathfrak{b}_{\bm{x}}\overline{\mathfrak{b}_{\bm{y}}}.
\end{align*}
Writing
\begin{equation*}
\mathfrak{B}(\bm{u})=\sum_{\substack{\bm{x}\in\mathcal{E}(X)^t\\ \bm{x}\equiv\bm{u}\mmod{p^d}}}\mathfrak{b}_{\bm{x}}
\end{equation*}
and
\begin{equation*}
\phi_{\bm{u}}(\bm{\gamma})=\gamma_c\sum_{i=1}^t u_i^{(c)}+\dots+\gamma_{d-1}\sum_{i=1}^t u_i^{(d-1)}
\end{equation*}
for brevity, and encoding the condition $(\underline{\bm{u}},\underline{\bm{v}})\in\widetilde{\mathcal{A}}_t(\bm{\lambda'})$ in integral form, we see that
\begin{align*}
\sum_{(\underline{\bm{u}},\underline{\bm{v}})\in\widetilde{\mathcal{A}}_t(\bm{\lambda'})}\hspace{-0.1in}\mathfrak{B}(\bm{u})\overline{\mathfrak{B}(\bm{v})}&=\sum_{\substack{(\underline{\bm{u}},\underline{\bm{v}}) \in A_p^{2t(d-c)}}}\hspace{-0.1in}\mathfrak{B}(\bm{u})\overline{\mathfrak{B}(\bm{v})} \oint e\big(\phi_{\bm{u}}(\bm{\gamma})-\phi_{\bm{v}}(\bm{\gamma})\big) e(-\bm{\gamma}\cdot\bm{\lambda'})\,d\bm{\gamma}\\
&=\oint e(-\bm{\gamma}\cdot\bm{\lambda'})\sum_{\substack{(\underline{\bm{u}},\underline{\bm{v}}) \in A_p^{2t(d-c)}}}\hspace{-0.1in} \mathfrak{B}(\bm{u})\overline{\mathfrak{B}(\bm{v})} e\big(\phi_{\bm{u}}(\bm{\gamma})-\phi_{\bm{v}}(\bm{\gamma})\big) \,d\bm{\gamma}\\
&\leq \oint\bigg|\sum_{\substack{(\underline{\bm{u}},\underline{\bm{v}}) \in A_p^{2t(d-c)}}}\hspace{-0.1in} \mathfrak{B}(\bm{u})\overline{\mathfrak{B}(\bm{v})} e\big(\phi_{\bm{u}}(\bm{\gamma})-\phi_{\bm{v}}(\bm{\gamma})\big)\bigg| \,d\bm{\gamma}.
\end{align*}
The expression on the right-hand side is now independent of our choice of $\bm{\lambda}$, so we conclude that
\begin{align*}
G_{c,d}(\bm{z})&\leq (2t-1)^{d-c} \oint \bigg|\sum_{\substack{\underline{\bm{u}} \in A_p^{t(d-c)}}}\hspace{-0.1in} \mathfrak{B}(\bm{u}) e(\phi_{\bm{u}}(\bm{\gamma}))\bigg|^2 \,d\bm{\gamma}\\
&\ll \sum_{(\underline{\bm{u}},\underline{\bm{v}})\in\widetilde{\mathcal{A}}_t(\bm{0})}\sum_{\substack{\bm{x},\bm{y}\in\mathcal{E}(X)^t\\ (\bm{x},\bm{y})\equiv(\bm{u},\bm{v})\mmod{p^d}}}\mathfrak{b}_{\bm{x}}\overline{\mathfrak{b}_{\bm{y}}}\\
&= \sum_{0\leq\bm{n}\leq t(p-1)}\bigg|\sum_{\underline{\bm{u}}\in\mathcal{A}_t(\bm{n})}\sum_{\substack{\bm{x}\in\mathcal{E}(X)^t\\ \bm{x}\equiv\bm{u}\mmod{p^d}}}\mathfrak{b}_{\bm{x}}\,\bigg|^2.
\end{align*}
Using Cauchy's inequality, we see that
\begin{align*}
G_{c,d}(\bm{z})&\ll \sum_{0\leq\bm{n}\leq t(p-1)}\bigg(\sum_{\underline{\bm{u}}\in\mathcal{A}_t(\bm{n})}\Big|\sum_{\substack{\bm{x}\in\mathcal{E}(X)^t\\ \bm{x}\equiv\bm{u}\mmod{p^d}}}\mathfrak{b}_{\bm{x}}\,\Big|^2\bigg)\bigg(\sum_{\underline{\bm{u}}\in\mathcal{A}_t(\bm{n})}1\bigg).
\end{align*}
From our initial assumption that $\mathcal{E}$ is a $(p,t,\delta)$-ellipsephic set, we know that for $\bm{n}=(n_c,\dots,n_{d-1})$ with $0\leq\bm{n}\leq t(p-1)$, we have
\begin{equation*}
\#\mathcal{A}_t(\bm{n})=\#\bigg\{\underline{\bm{u}}\in A_p^{t(d-c)}\biggm| \sum_{i=1}^t u_i^{(r)}=n_r\mbox{ for }c\leq r\leq d-1\bigg\}
\ll\prod_{r=c}^{d-1} p^{\delta}= p^{\delta(d-c)},
\end{equation*}
and consequently
\begin{align*}
G_{c,d}(\bm{z})&\ll p^{\delta(d-c)} \sum_{0\leq\bm{n}\leq t(p-1)}\sum_{\underline{\bm{u}}\in\mathcal{A}_t(\bm{n})}\Big|\sum_{\substack{\bm{x}\in\mathcal{E}(X)^t\\ \bm{x}\equiv\bm{u}\mmod{p^d}}}\mathfrak{b}_{\bm{x}}\,\Big|^2\\
&=p^{\delta(d-c)}\sum_{\substack{\bm{u}\in\mathcal{E}(p^d)^t\\ \bm{u}\equiv\bm{z}\mmod{p^c}}}\Big|\sum_{\substack{\bm{x}\in\mathcal{E}(X)^t\\ \bm{x}\equiv\bm{u}\mmod{p^d}}}\mathfrak{b}_{\bm{x}}\,\Big|^2,
\end{align*}
as claimed. \qedhere
\end{proof}

The first of the expressions we are interested in represents the weighted number of solutions to our system of equations in which the variables fall into certain congruence classes modulo powers of $p$. For $a,b\in\mathbb{N}$, we let
\begin{equation*}
I_{a,b}(\xi,\eta)=\oint\left|f_a(\bm{\alpha},\xi)\right|^{2t}\left|f_b(\bm{\alpha},\eta)\right|^{4t}\,d\bm{\alpha},
\end{equation*}
and observe that this expression counts the number of solutions to the system
\begin{equation}\label{Iabcong}
\sum_{i=1}^t(x_i^j-y_i^j)=\sum_{l=1}^{2t}(u_l^j-v_l^j),\quad(j=1,2)
\end{equation}
with $x_i,y_i,u_l,v_l\in\mathcal{E}(X)$ for $1\leq i\leq t$ and $1\leq l\leq 2t$, satisfying $\bm{x}\equiv\bm{y}\equiv\xi\mmod{p^a}$ and $\bm{u}\equiv\bm{v}\equiv\eta\mmod{p^b}$, and with each solution being counted with weight $\rho_a(\xi)^{-2t}\rho_b(\eta)^{-4t} \mathfrak{a}_{\bm{x}}\mathfrak{a}_{\bm{y}}\mathfrak{a}_{\bm{u}}\mathfrak{a}_{\bm{v}}$.
We also assume that $I_{0,0}(\xi,\eta)=\mathfrak{J}(X)$ for any $\xi$ and $\eta$.

Next, a weighted sum over the possible values of $\xi$ and $\eta$ in the above definition will simplify later computations. For $h\in\mathbb{N}$, we define
\begin{equation}\label{Kdefn}
K_{a,b}^{h}=\rho_0^{-4}\sum_{\xi\in\mathcal{E}(p^a)}\sum_{\substack{\eta\in\mathcal{E}(p^b)\\ p^{h-1}\|(\xi-\eta)}}\rho_a(\xi)^2\rho_b(\eta)^2 I_{a,b}(\xi,\eta),
\end{equation}
where the notation $p^c\| d$ means that $p^c\mid d$ and $p^{c+1}\nmid d$.

The next lemma allows us to apply Lemma \ref{diaglemma} as the key ingredient in an iterative process which we use in Section \ref{PfMainThm} to complete the proof of Theorem \ref{JThm}. 

\begin{lemma}\label{Kiter}
For $a,b,h\in\mathbb{N}$ satisfying $h\leq a<b\leq 2a-h+1$ and $p^b<X$, 
we have
\begin{equation*}
K_{a,b}^{h}\ll p^{\delta(2b-a-h+1)} (X/p^b)^{(\lambda+\iota)/2}(K_{b,2b-h+1}^{h})^{1/2}.
\end{equation*}
\end{lemma}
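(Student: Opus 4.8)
The plan is to unpack the definition of $K_{a,b}^h$, use the binomial theorem to convert the degree-one and degree-two equations in (\ref{Iabcong}) into congruences that isolate the $t$ variables $\bm{x},\bm{y}$ lying in the class $\xi\mmod{p^a}$, apply Lemma \ref{diaglemma} to ``lift'' these to diagonal solutions modulo $p^{2b-h+1}$, and then reassemble the result as a term of the shape $K_{b,2b-h+1}^h$ after one application of Cauchy--Schwarz. First I would fix $\xi\in\mathcal{E}(p^a)$ and $\eta\in\mathcal{E}(p^b)$ with $p^{h-1}\|(\xi-\eta)$, and look at $I_{a,b}(\xi,\eta)$, which counts solutions to (\ref{Iabcong}) with $\bm{x}\equiv\bm{y}\equiv\xi\mmod{p^a}$ and $\bm{u}\equiv\bm{v}\equiv\eta\mmod{p^b}$. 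Writing $x_i=\xi+p^a z_i$ etc.\ and expanding $x_i^2-\eta^2=(x_i-\eta)^2+2\eta(x_i-\eta)$, the two equations of (\ref{Iabcong}), read modulo $p^{2b-h+1}$ together with the condition $p^{h-1}\|(\xi-\eta)$, force
\begin{equation*}
\sum_{i=1}^t (x_i-y_i)\equiv 0 \mmod{p^{b}} \quad\text{and hence}\quad \sum_{i=1}^t(x_i-y_i)\equiv 0\mmod{p^{\,2b-a}},
\end{equation*}
the sharper congruence coming from the quadratic equation after dividing out the common factor of $p$ coming from $\xi-\eta$ and using $x_i-y_i\equiv 0\mmod{p^a}$. (This is exactly the standard efficient-congruencing manoeuvre; the hypotheses $h\le a<b\le 2a-h+1$ are precisely what make $2b-h+1\le 2(2b-a)-\cdots$ work out so that $G_{a,\,2b-h+1}$ is the relevant object.) Thus the contribution of $\bm{x},\bm{y}$ is bounded by $G_{a,\,2b-h+1}(\bm{\xi};\bm{\mathfrak b})$ for a suitable weight vector $\bm{\mathfrak b}$ built from the normalised $\mathfrak a_x/\rho_a(\xi)$, and the contribution of $\bm{u},\bm{v}$ is handled by Lemma \ref{atheta}, giving
\begin{equation*}
I_{a,b}(\xi,\eta)\ll (X/p^b)^{\lambda+\iota}\cdot\big(\text{a }G\text{-factor}\big)^{1/2}\cdot(\cdots)^{1/2},
\end{equation*}
after one Cauchy--Schwarz split separating the $2t$ variables $\bm u,\bm v$ (bounded by $f_b$, hence by Lemma \ref{atheta}) from the $t$-variable congruence piece.

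Next I would apply Lemma \ref{diaglemma} with $c=a$ and $d=2b-h+1$ to the $G$-factor: this costs a factor $p^{\delta(2b-h+1-a)}=p^{\delta(2b-a-h+1)}$ and replaces the congruence $\sum x_i\equiv\sum y_i\mmod{p^{2b-h+1}}$ by a genuinely diagonal sum over $\bm u\in\mathcal E(p^{2b-h+1})^t$ with $\bm u\equiv\bm\xi\mmod{p^a}$ of $\big|\sum_{\bm x\equiv\bm u}\mathfrak b_{\bm x}\big|^2$. Re-expanding this square as a count of solutions with $\bm x\equiv\bm y\equiv\bm u\mmod{p^{2b-h+1}}$ and recombining with the $\bm u,\bm v$ variables, one recognises the sum over $\xi,\eta$ weighted by $\rho_a(\xi)^2\rho_b(\eta)^2$ (and using the telescoping identity (\ref{sumrhoa}) to pass from $\rho_a(\xi)$ to $\rho_{2b-h+1}$) as precisely $K_{b,\,2b-h+1}^{h}$ — here one checks that the congruence condition $p^{h-1}\|(\xi-\eta)$ is preserved under refining $\xi$ to a class mod $p^{2b-h+1}$ and $\eta$ stays in its class mod $p^b$, which is where $b\le 2a-h+1$ (equivalently $2b-h+1\ge b$, and the gap analysis on the $p$-adic valuation) is used. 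Carrying out the Cauchy--Schwarz step so that one of the two factors is exactly $\oint|f_b|^{6t}$-type (bounded by $(X/p^b)^{\lambda+\iota}$ via Lemma \ref{atheta}) and the other is the $K_{b,2b-h+1}^h$ term raised to the power $1/2$ yields
\begin{equation*}
K_{a,b}^h\ll p^{\delta(2b-a-h+1)}(X/p^b)^{(\lambda+\iota)/2}\big(K_{b,\,2b-h+1}^h\big)^{1/2},
\end{equation*}
which is the claim.

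The main obstacle, I expect, is the bookkeeping in the middle step: correctly tracking the $p$-adic valuations so that the degree-two equation really does upgrade $\sum(x_i-y_i)\equiv 0$ from modulus $p^b$ to modulus $p^{2b-a}$ (and hence, via Lemma \ref{diaglemma} applied at level $2b-h+1$, lands one in the parameter range for $K_{b,2b-h+1}^h$), and making sure the weight normalisations $\rho_a(\xi)^{-2t}$, $\rho_b(\eta)^{-4t}$ recombine cleanly with the $\rho_0^{-4}$ and the sum over classes so that no stray powers of $\rho$ survive — the identity (\ref{sumrhoa}) and its refinement are the tools for this, but one has to invoke them in the right order relative to the Cauchy--Schwarz split. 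The analytic input (Lemmas \ref{atheta} and \ref{diaglemma}) is entirely off-the-shelf; the work is in arranging the combinatorial identity $I_{a,b}(\xi,\eta)\rightsquigarrow$ (something $\le G_{a,2b-h+1}$) and then summing.
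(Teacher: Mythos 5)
Your high-level skeleton (binomial expansion, Lemma \ref{diaglemma}, Cauchy--Schwarz, and recombination into $K_{b,2b-h+1}^h$) is aimed in the right direction, but there is a genuine gap at the central step: you assert that the two equations of (\ref{Iabcong}), read modulo $p^{2b-h+1}$, ``force'' $\sum(x_i-y_i)\equiv 0\mmod{p^{2b-a}}$, and then apply Lemma \ref{diaglemma} once with $c=a$, $d=2b-h+1$. Neither part of this is justified. From the equations alone one can only extract $\sum(\tilde x_i-\tilde y_i)\equiv 0 \mmod{p^{b-a}}$ (from $j=1$) and the relation
\begin{equation*}
p^a\sum_{i}(\tilde x_i^2-\tilde y_i^2)+2(\xi-\eta)\sum_i(\tilde x_i-\tilde y_i)\equiv 0\mmod{p^{2b-a}}
\end{equation*}
(from $j=2$), and the first term involves $\sum(\tilde x_i^2-\tilde y_i^2)$, about which nothing is known at this stage. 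Dividing out $(\xi-\eta)$ does not give a stronger congruence on $\sum(\tilde x_i-\tilde y_i)$ until one knows that the \emph{individual} differences $\tilde x_i-\tilde y_i$ are highly divisible --- and that is exactly the information Lemma \ref{diaglemma} buys. Consequently you cannot write $I_{a,b}(\xi,\eta)$ directly in terms of $G_{a,2b-h+1}(\bm\xi)$: that quantity only counts pairs with $\sum x_i\equiv\sum y_i\mmod{p^{2b-h+1}}$, which is strictly fewer than the pairs contributing to $I_{a,b}$, so the intended inequality does not hold. (Note also that your claimed modulus $p^{2b-a}$ is weaker than the $p^{2b-h+1}$ needed for the single application you propose, since $a\ge h$.)

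The actual argument iterates Lemma \ref{diaglemma} \emph{three} times, alternating ``lift'' and ``strengthen'': first $G_{a,b}\to$ diagonal mod $p^b$ at cost $p^{\delta(b-a)}$; feeding $\tilde x_i\equiv\tilde y_i\mmod{p^{b-a}}$ into the quadratic congruence forces $\sum(\tilde x_i-\tilde y_i)\equiv 0\mmod{p^{b-h+1}}$ (here $p^{h-1}\|(\xi-\eta)$ and $p>2$ are used to divide out); then $G_{b,a+b-h+1}$ lifts to diagonal mod $p^{a+b-h+1}$, the hypothesis $b\le 2a-h+1$ guarantees $a+b-h+1\ge 2b-a$ so the quadratic congruence can be read at full strength, giving $\sum(\tilde x_i-\tilde y_i)\equiv 0\mmod{p^{2b-a-h+1}}$; and finally $G_{a+b-h+1,\,2b-h+1}$ completes the lift. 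The three costs $p^{\delta(b-a)}\cdot p^{\delta(a-h+1)}\cdot p^{\delta(b-a)}$ happen to combine to $p^{\delta(2b-a-h+1)}$, which is why your ``one-shot'' exponent looks right, but the intermediate iterations are not optional --- they are what makes the congruence available at all. Your Cauchy--Schwarz and $\rho$-bookkeeping sketch is consistent with what then follows, but the proof as proposed would not go through without supplying this iterative mechanism.
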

\begin{proof}
We begin by considering $I_{a,b}(\xi,\eta)$, and note that by the definition of $K_{a,b}^{h}$, we may assume that we are working in the situation in which $p^{h-1}\|(\xi-\eta)$.

Writing $x_i=p^a\tilde{x}_i+\xi$ and $u_l=p^b\tilde{u}_l+\eta$, and similarly for $\bm{y}$ and $\bm{v}$, we apply the binomial theorem to (\ref{Iabcong}) to see that
\begin{equation*}
\sum_{i=1}^t\big((p^a\tilde{x}_i+\xi-\eta)^j-(p^a\tilde{y}_i+\xi-\eta)^j\big)=p^{jb}\sum_{l=1}^{2t}(\tilde{u}_l^j-\tilde{v}_l^j),\quad(j=1,2),
\end{equation*}
and consequently that we have the congruences
\begin{equation*}
\sum_{i=1}^t\big((p^a\tilde{x}_i+\xi-\eta)^j-(p^a\tilde{y}_i+\xi-\eta)^j\big)\equiv 0\mmod{p^{jb}},\quad(j=1,2).
\end{equation*}
In other words, we have
\begin{equation}\label{cong1}
\sum_{i=1}^t(\tilde{x}_i-\tilde{y}_i)\equiv 0\mmod{p^{b-a}},
\end{equation}
and
\begin{equation}\label{cong2}
p^a\sum_{i=1}^t(\tilde{x}_i^2-\tilde{y}_i^2)+2(\xi-\eta)\sum_{i=1}^t(\tilde{x}_i-\tilde{y}_i)\equiv 0\mmod{p^{2b-a}}.
\end{equation}

We fix the weights appearing in the definition of $G_{c,d}(\bm{z})$ to be
\begin{equation*}
\mathfrak{b}_x=\rho_a(\xi)^{-1}\mathfrak{a}_xe(\alpha_1x+\alpha_2x^2).
\end{equation*}
Encoding (\ref{cong1}) as part of our integral, and writing $\bm{\xi}=(\xi,\dots,\xi)$, we have
\begin{align*}
I_{a,b}(\xi,\eta)&=\oint G_{a,b}(\bm{\xi})\left|f_b(\bm{\alpha},\eta)\right|^{4t}\,d\bm{\alpha}.
\end{align*}
By Lemma \ref{diaglemma}, we may conclude that
\begin{equation*}
I_{a,b}(\xi,\eta)\ll p^{\delta(b-a)}\sum_{\substack{\bm{z}\in\mathcal{E}(p^{b})^t\\\bm{z}\equiv\xi\mmod{p^a}}}\oint
\Big|\sum_{\substack{\bm{x}\in\mathcal{E}(X)^t\\\bm{x}\equiv\bm{z}\mmod{p^b}}}\mathfrak{b}_{\bm{x}}\hspace{0.05in}\Big|^2\left|f_b(\bm{\alpha},\eta)\right|^{4t}\,d\bm{\alpha}.
\end{equation*}
We have therefore introduced, at a cost of $p^{\delta(b-a)}$, the additional condition
\begin{equation*}
{x}_i\equiv{y}_i\mmod{p^{b}},\quad(1\leq i\leq t),
\end{equation*}
or equivalently
\begin{equation*}
\tilde{x}_i\equiv\tilde{y}_i\mmod{p^{b-a}},\quad(1\leq i\leq t).
\end{equation*}
Substituting this back into (\ref{cong2}), and using the facts that $p^{h-1}\|(\xi -\eta)$ and $h-1<a<b$, we see that
\begin{equation*}
\sum_{i=1}^t(\tilde{x}_i-\tilde{y}_i)\equiv 0\mmod{p^{b-h+1}}.
\end{equation*}
Encoding this congruence as before, we obtain
\begin{equation*}
I_{a,b}(\xi,\eta)\ll p^{\delta(b-a)}\sum_{\substack{\bm{z}\in\mathcal{E}(p^{b})^t\\\bm{z}\equiv\xi\mmod{p^a}}}\oint
G_{b,a+b-h+1}(\bm{z})\left|f_b(\bm{\alpha},\eta)\right|^{4t}\,d\bm{\alpha}.
\end{equation*}
We now apply Lemma \ref{diaglemma} again to see that
\begin{equation*}
I_{a,b}(\xi,\eta)\ll p^{\delta(b-h+1)}\sum_{\substack{\bm{z}\in\mathcal{E}(p^{a+b-h+1})^t\\\bm{z}\equiv\xi\mmod{p^a}}}\oint
\Big|\hspace{-0.25in}\sum_{\substack{\bm{x}\in\mathcal{E}(X)^t\\\bm{x}\equiv\bm{z}\mmod{p^{a+b-h+1}}}}\hspace{-0.25in}\mathfrak{b}_{\bm{x}}\hspace{0.05in}\Big|^2\left|f_b(\bm{\alpha},\eta)\right|^{4t}\,d\bm{\alpha},
\end{equation*}
and we have introduced the additional condition
\begin{equation*}
\tilde{x}_i\equiv\tilde{y}_i\mmod{p^{b-h+1}},\quad(1\leq i\leq t).
\end{equation*}
Repeating this process, we reach the situation in which
\begin{equation*}
\sum_{i=1}^t(\tilde{x}_i-\tilde{y}_i)\equiv 0\mmod{p^{2b-a-h+1}},
\end{equation*}
and a final application of Lemma \ref{diaglemma} allows us to conclude that
\begin{equation*}
I_{a,b}(\xi,\eta)\ll p^{\delta(2b-a-h+1)}\sum_{\substack{\bm{z}\in\mathcal{E}(p^{2b-h+1})^t\\\bm{z}\equiv\xi\mmod{p^a}}}\oint
\Big|\hspace{-0.2in}\sum_{\substack{\bm{x}\in\mathcal{E}(X)^t\\\bm{x}\equiv\bm{z}\mmod{p^{2b-h+1}}}}\hspace{-0.2in}\mathfrak{b}_{\bm{x}}\hspace{0.05in}\Big|^2\left|f_b(\bm{\alpha},\eta)\right|^{4t}\,d\bm{\alpha}.
\end{equation*}

Using the definition of the weights $\bm{\mathfrak{b}}$, and writing $b'=2b-h+1$, we deduce that
\begin{align*}
I_{a,b}(\xi,\eta)&\ll p^{\delta(b'-a)}\oint\Big(\sum_{\substack{\xi'\in\mathcal{E}(p^{b'})\\\xi'\equiv\xi\mmod{p^a}}}\rho_a(\xi)^{-2}\rho_{b'}(\xi')^{2}\left|f_{b'}(\bm{\alpha},\xi')\right|^{2}\Big)^t\left|f_b(\bm{\alpha},\eta)\right|^{4t}\,d\bm{\alpha},
\end{align*}
and note that our assumption that $p^{h-1}\|(\xi-\eta)$ implies that we also have $p^{h-1}\|(\xi'-\eta)$. An application of H\"older's inequality gives
\begin{align*}
I_{a,b}(\xi,\eta)&\ll p^{\delta(b'-a)}\rho_a(\xi)^{-2}\sum_{\substack{\xi'\in\mathcal{E}(p^{b'})\\\xi'\equiv\xi\mmod{p^a}}}\rho_{b'}(\xi')^{2}\oint\left|f_{b'}(\bm{\alpha},\xi')\right|^{2t}\left|f_b(\bm{\alpha},\eta)\right|^{4t}\,d\bm{\alpha}\\
&= p^{\delta(b'-a)}\rho_a(\xi)^{-2}\sum_{\substack{\xi'\in\mathcal{E}(p^{b'})\\\xi'\equiv\xi\mmod{p^a}}}\rho_{b'}(\xi')^{2}I_{b',b}(\xi',\eta).
\end{align*}
Using Cauchy's inequality and Lemma \ref{atheta}, we see that
\begin{align*}
I_{b',b}(\xi',\eta)&=\oint\left|f_{b'}(\bm{\alpha},\xi')\right|^{2t}\left|f_b(\bm{\alpha},\eta)\right|^{4t}\,d\bm{\alpha}\\
&\leq \bigg(\oint\left|f_b(\bm{\alpha},\eta)\right|^{2t}\left|f_{b'}(\bm{\alpha},\xi')\right|^{4t}\,d\bm{\alpha}\bigg)^{1/2}\bigg(\oint\left|f_b(\bm{\alpha},\eta)\right|^{6t}\,d\bm{\alpha}\bigg)^{1/2}\\
&\ll I_{b,b'}(\eta,\xi')^{1/2} (X/p^b)^{(\lambda+\iota)/2}.
\end{align*}
Substituting this into (\ref{Kdefn}), we see that
\begin{align*}
K_{a,b}^{h}&=\rho_0^{-4}\sum_{\xi\in\mathcal{E}(p^a)}\sum_{\substack{\eta\in\mathcal{E}(p^b)\\ p^{h-1}\|(\xi-\eta)}}\rho_a(\xi)^2\rho_b(\eta)^2 I_{a,b}(\xi,\eta)\\
&\ll p^{\delta(b'-a)} (X/p^b)^{(\lambda+\iota)/2}\rho_0^{-4}\sum_{\eta\in\mathcal{E}(p^b)}\sum_{\substack{\xi'\in\mathcal{E}(p^{b'})\\ p^{h-1}\|(\xi'-\eta)}}\rho_b(\eta)^2 \rho_{b'}(\xi')^{2}I_{b,b'}(\eta,\xi')^{1/2}.
\end{align*}
By Cauchy's inequality and (\ref{sumrhoa}), we conclude that
\begin{align*}
K_{a,b}^h&\ll p^{\delta(b'-a)} (X/p^b)^{(\lambda+\iota)/2}\rho_0^{-2} \bigg (\sum_{\eta\in\mathcal{E}(p^b)}\sum_{\substack{\xi'\in\mathcal{E}(p^{b'})\\ p^{h-1}\|(\xi'-\eta)}}\rho_b(\eta)^2 \rho_{b'}(\xi')^{2}I_{b,b'}(\eta,\xi')\bigg)^{1/2}\\
&=p^{\delta(b'-a)} (X/p^b)^{(\lambda+\iota)/2}(K_{b,b'}^{h})^{1/2},
\end{align*}
as claimed.
\end{proof}

Finally, the following lemma provides a key step in the iterative process of Section \ref{PfMainThm}.
\begin{lemma}\label{Ivvlemma}
For $h\in\mathbb{N}$, and for $\xi\in\mathcal{E}(p^{h-1})$, we have
\begin{align*}
I_{h-1,h-1}&(\xi,\xi)\ll \rho_{h-1}(\xi)^{-4}\bigg(\hspace{-0.1in}\sum_{\substack{\eta\in\mathcal{E}(p^h)\\ \eta\equiv\xi\mmod{p^{h-1}}}}\rho_h(\eta)^{4}I_{h,h}(\eta,\eta)+ p^{2s-2}\rho_0^4 \,K_{h,h}^h\bigg).
\end{align*}
\end{lemma}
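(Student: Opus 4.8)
The plan is to break up $f_{h-1}(\bm\alpha,\xi)$ into the pieces supported on the residue classes modulo $p^h$ that refine $\xi$, treat $|f_{h-1}(\bm\alpha,\xi)|^{2s}$ with the exponent split as $2t+4t$, and then separate the diagonal from the off-diagonal contribution of the resulting double sum over pairs of such classes: the diagonal produces the first term of the lemma, and the off-diagonal the term featuring $K_{h,h}^h$.

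Two elementary facts underpin the argument. First, for $\xi\in\mathcal{E}(p^{h-1})$ the residues $\eta\in\mathcal{E}(p^h)$ with $\eta\equiv\xi\mmod{p^{h-1}}$ differ only in the digit in position $h-1$, so there are at most $p$ of them, and any two distinct such residues $\eta,\eta'$ agree modulo $p^{h-1}$ while being distinct elements of $\mathcal{E}(p^h)$, so that $p^{h-1}\|(\eta-\eta')$. Second, since the sum defining $\rho_h(\eta)^2$ ranges over a subset of the one defining $\rho_{h-1}(\xi)^2$ whenever $\eta\equiv\xi\mmod{p^{h-1}}$, we have $\rho_h(\eta)\le\rho_{h-1}(\xi)$ for every such $\eta$.

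Recalling that $I_{h-1,h-1}(\xi,\xi)=\oint|f_{h-1}(\bm\alpha,\xi)|^{2t}|f_{h-1}(\bm\alpha,\xi)|^{4t}\,d\bm\alpha$ and that $\rho_{h-1}(\xi)f_{h-1}(\bm\alpha,\xi)=\sum_{\eta\equiv\xi\mmod{p^{h-1}}}\rho_h(\eta)f_h(\bm\alpha,\eta)$ is a sum of at most $p$ terms, I would apply the triangle inequality followed by H\"older's inequality to each of the two factors $|f_{h-1}(\bm\alpha,\xi)|^{2t}$ and $|f_{h-1}(\bm\alpha,\xi)|^{4t}$ separately, distributing the exponents over the summands; on integrating, this gives
\begin{equation*}
I_{h-1,h-1}(\xi,\xi)\ll p^{2s-2}\,\rho_{h-1}(\xi)^{-2s}\sum_{\eta}\sum_{\eta'}\rho_h(\eta)^{2t}\rho_h(\eta')^{4t}\,I_{h,h}(\eta,\eta'),
\end{equation*}
the factor $p^{2s-2}$ arising as $p^{2t-1}\cdot p^{4t-1}$, with both sums running over $\eta,\eta'\equiv\xi\mmod{p^{h-1}}$. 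I then split the double sum according to whether $\eta=\eta'$ or $\eta\ne\eta'$. On the diagonal, $\rho_h(\eta)^{2t}\rho_h(\eta')^{4t}=\rho_h(\eta)^{2s}\le\rho_{h-1}(\xi)^{2s-4}\rho_h(\eta)^4$, so after cancelling powers of $\rho_{h-1}(\xi)$ this part contributes $\ll\rho_{h-1}(\xi)^{-4}\sum_{\eta}\rho_h(\eta)^4I_{h,h}(\eta,\eta)$ (the fixed quantity $p^{2s-2}$ being absorbed into the implied constant here, or else avoided by isolating from the start the solutions in which every variable lies in a single class modulo $p^h$, which contribute this term with no multiplicative constant). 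On the off-diagonal, $\rho_h(\eta)^{2t}\rho_h(\eta')^{4t}=\rho_h(\eta)^{2t-2}\rho_h(\eta')^{4t-2}\cdot\rho_h(\eta)^2\rho_h(\eta')^2\le\rho_{h-1}(\xi)^{2s-4}\rho_h(\eta)^2\rho_h(\eta')^2$, and since every pair $\eta\ne\eta'$ with $\eta\equiv\eta'\equiv\xi\mmod{p^{h-1}}$ occurs among the pairs indexing $K_{h,h}^h$ in (\ref{Kdefn}) --- and all the summands there are non-negative, so the restricted sum may be enlarged to the full one --- this part contributes $\ll p^{2s-2}\rho_{h-1}(\xi)^{-4}\rho_0^4K_{h,h}^h$. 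Combining the two contributions gives the lemma.

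The step calling for the most care is the off-diagonal one: one must check that the weight $\rho_h(\eta)^{2t}\rho_h(\eta')^{4t}$ thrown up by the H\"older step is exactly what is needed, after the factor $\rho_{h-1}(\xi)^{2s-4}$ has been extracted via $\rho_h(\cdot)\le\rho_{h-1}(\xi)$, to match the weight $\rho_h(\eta)^2\rho_h(\eta')^2$ used in the definition (\ref{Kdefn}) of $K_{h,h}^h$, and that the condition $p^{h-1}\|(\xi-\eta)$ there is precisely the condition describing two distinct residues modulo $p^h$ that lie above a common residue modulo $p^{h-1}$. The surplus powers of $\rho_{h-1}(\xi)$ in both the diagonal and off-diagonal parts then cancel against the normalising factor $\rho_{h-1}(\xi)^{-2s}$ carried by $f_{h-1}(\bm\alpha,\xi)$, which is what leaves behind the clean factor $\rho_{h-1}(\xi)^{-4}$ in the statement.
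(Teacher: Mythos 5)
The approach you take is genuinely different from the paper's, and it has a gap that your own hedging doesn't resolve. Your argument applies the power-mean inequality to both factors $|f_{h-1}|^{2t}$ and $|f_{h-1}|^{4t}$ after decomposing $f_{h-1}$ over residue classes mod $p^h$, which (with $s = 3t$) correctly yields
\begin{equation*}
I_{h-1,h-1}(\xi,\xi)\le p^{2s-2}\,\rho_{h-1}(\xi)^{-2s}\sum_{\eta,\eta'}\rho_h(\eta)^{2t}\rho_h(\eta')^{4t}\,I_{h,h}(\eta,\eta').
\end{equation*}
Your treatment of the off-diagonal terms, including the observation that $\eta\ne\eta'$ with $\eta\equiv\eta'\pmod{p^{h-1}}$ forces $p^{h-1}\|(\eta-\eta')$, matches the definition of $K_{h,h}^h$ correctly. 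But the diagonal piece comes out as $p^{2s-2}\rho_{h-1}(\xi)^{-4}\sum_\eta \rho_h(\eta)^4 I_{h,h}(\eta,\eta)$, with a surplus $p^{2s-2}$ that the stated lemma does not have.

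Your claim that $p^{2s-2}$ ``can be absorbed into the implied constant'' is where the proof breaks. In Section~\ref{PfMainThm}, Lemma~\ref{Ivvlemma} is iterated $\nu = \lceil B/2^{2n+2}\rceil$ times with $X = p^B$, so any multiplicative constant $C$ on the first term compounds to $C^\nu\approx X^{(\log_p C)/2^{2n+2}}$. The slack in the exponent available at the end of the argument is of order $\iota/2^{2n+2}$ with $\iota = \lambda/2^{2n+3}$, so one needs roughly $\log_p C < \iota$. With $C = p^{2s-2}$ this fails dramatically ($2s-2\ge 10$, while $\iota$ is tiny), so the factor is not negligible: it is a genuine power of $X$ that destroys the contradiction.

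Your fallback --- ``isolate from the start the solutions in which every variable lies in a single class modulo $p^h$'' --- is the right opening move, but it does not pair with your H\"older step. Writing $I_{h-1,h-1}=\mathfrak{J}_h+\mathfrak{J}_h^*$ and bounding $\mathfrak{J}_h$ exactly is fine, but your H\"older inequality bounds the whole of $I_{h-1,h-1}$, not $\mathfrak{J}_h^*$: you cannot subtract the exact diagonal from a lossy upper bound. The paper instead bounds $\mathfrak{J}_h^*$ by pulling out two variables in distinct classes, applying H\"older with exponents $(2s,2s,s/(s-1))$ so that $|f_{h-1}|^{2s-2}$ is kept intact and $I_{h-1,h-1}^{1-1/s}$ reappears on the right, and then closing this self-referential inequality via a maximum/dichotomy argument; that is precisely the ingredient your proposal lacks.
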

\begin{proof}
We observe that
\begin{align*}
I_{h-1,h-1}(\xi,\xi)&=\oint\left|f_{h-1}(\bm{\alpha},\xi)\right|^{2t}\left|f_{h-1}(\bm{\alpha},\xi)\right|^{4t}\,d\bm{\alpha}\\
&=\oint\left|f_{h-1}(\bm{\alpha},\xi)\right|^{2s}\,d\bm{\alpha},
\end{align*}
which counts the number of solutions to (\ref{quadVin}) with $x_i,y_i\in\mathcal{E}(X)$ for $1\leq i\leq s$ and $\bm{x}\equiv\bm{y}\equiv\xi\mmod{p^{h-1}}$, each solution being counted with weight $\rho_{h-1}(\xi)^{-2s}\mathfrak{a}_{\bm{x}}\mathfrak{a}_{\bm{y}}$.

We partition the solutions based on the congruence classes in which the variables lie modulo $p^h$, letting $\mathfrak{J}_h(X,\xi)$ denote the contribution from solutions in which all variables are congruent modulo $p^h$, and $\mathfrak{J}_h^*(X,\xi)$ the contribution from the remaining solutions, so that
\begin{equation}\label{Jsplit}
I_{h-1,h-1}(\xi,\xi)=\mathfrak{J}_h(X,\xi)+ \mathfrak{J}_h^*(X,\xi).
\end{equation} 
We have
\begin{align}\label{Jhest}
\mathfrak{J}_h(X,\xi)&=\sum_{\substack{\eta\in\mathcal{E}(p^h)\\ \eta\equiv\xi\mmod{p^{h-1}}}}\rho_{h-1}(\xi)^{-2s}\rho_h(\eta)^{2s}I_{h,h}(\eta,\eta)\nonumber\\
&\leq \rho_{h-1}(\xi)^{-4}\sum_{\substack{\eta\in\mathcal{E}(p^h)\\ \eta\equiv\xi\mmod{p^{h-1}}}}\rho_h(\eta)^{4}I_{h,h}(\eta,\eta),
\end{align}
since $\rho_h(\eta)^2\leq\rho_{h-1}(\xi)^2$ for $\eta\equiv\xi\mmod{p^{h-1}}$.

When estimating $\mathfrak{J}_h^*(X,\xi)$, we may assume, up to a combinatorial factor, that $x_1\not\equiv x_2\mmod{p^h}$, and observe that $\mathfrak{J}_h^*(X,\xi)$ is bounded above by at most a constant multiple of
\begin{align*}
&\rho_{h-1}(\xi)^{-2}\hspace{-0.2in}\sum_{\substack{\eta\neq\eta'\in\mathcal{E}(p^h)\\ \eta\equiv\eta'\equiv\xi\mmod{p^{h-1}}}}\hspace{-0.2in}\rho_h(\eta)\rho_h(\eta')\oint f_h(\bm{\alpha},\eta)f_h(\bm{-\alpha},\eta')\left|f_{h-1}(\bm{\alpha},\xi)\right|^{2s-2}\,d\bm{\alpha}\\
&\leq \rho_{h-1}(\xi)^{-2}\hspace{-0.2in}\sum_{\substack{\eta\neq\eta'\in\mathcal{E}(p^h)\\ \eta\equiv\eta'\equiv\xi\mmod{p^{h-1}}}}\hspace{-0.2in}\rho_h(\eta)\rho_h(\eta') I_{h,h}(\eta,\eta')^{1/2s} I_{h,h}(\eta',\eta)^{1/2s}I_{h-1,h-1}(\xi,\xi)^{1-1/s},
\end{align*}
by H\"older's inequality. If $\mathfrak{J}_h^*(X,\xi)=\max{\{\mathfrak{J}_h(X,\xi),\mathfrak{J}_h^*(X,\xi)\}}$, we have
\begin{equation*}
I_{h-1,h-1}(\xi,\xi)\ll \mathfrak{J}_h^*(X,\xi),
\end{equation*}
and may rearrange to obtain
\begin{align}\label{Jh*est}
I&_{h-1,h-1}(\xi,\xi)\ll \rho_{h-1}(\xi)^{-2s}\bigg(\sum_{\substack{\eta\neq\eta'\in\mathcal{E}(p^h)\\ \eta\equiv\eta'\equiv\xi\mmod{p^{h-1}}}}\hspace{-0.1in}\rho_h(\eta)\rho_h(\eta') I_{h,h}(\eta,\eta')^{1/s}\bigg)^s\nonumber\\
&\ll\rho_{h-1}(\xi)^{-2s}\bigg(\sum_{\substack{\eta\neq\eta'\in\mathcal{E}(p^h)\\ \eta\equiv\eta'\equiv\xi\mmod{p^{h-1}}}}\hspace{-0.1in}\rho_h(\eta)^s\rho_h(\eta')^s I_{h,h}(\eta,\eta')\bigg) \bigg(\sum_{\substack{\eta\neq\eta'\in\mathcal{E}(p^h)\\ \eta\equiv\eta'\equiv\xi\mmod{p^{h-1}}}}\hspace{-0.1in}1\bigg)^{s-1}\nonumber\\
&\ll \rho_{h-1}(\xi)^{-4}p^{2s-2}\rho_0^4 \,K_{h,h}^h.
\end{align}
Substituting (\ref{Jhest}) and (\ref{Jh*est}) into (\ref{Jsplit}), we deduce that
\begin{align*}
I_{h-1,h-1}&(\xi,\xi)\ll \rho_{h-1}(\xi)^{-4}\bigg(\hspace{-0.1in}\sum_{\substack{\eta\in\mathcal{E}(p^h)\\ \eta\equiv\xi\mmod{p^{h-1}}}}\rho_h(\eta)^{4}I_{h,h}(\eta,\eta)+ p^{2s-2}\rho_0^4 \,K_{h,h}^h\bigg),
\end{align*}
as claimed.
\end{proof}

\section{Proof of Theorem \ref{JThm}} \label{PfMainThm}

We first wish to handle those solutions in which all of our variables are congruent modulo some small power of $p$, since these should contribute neglibly to the total, but would prevent some of the mechanisms of the previous section from working smoothly.

Applying Lemma \ref{Ivvlemma} twice, we have
\begin{align*}
&\mathfrak{J}(X)\ll \rho_0^{-4}\sum_{\xi\in\mathcal{E}(p)}\rho_1(\xi)^{4}I_{1,1}(\xi,\xi) + p^{2s-2} K_{1,1}^1\\
&\ll \rho_0^{-4}\sum_{\xi\in\mathcal{E}(p)}\bigg(\hspace{-0.1in}\sum_{\substack{\eta\in\mathcal{E}(p^{2})\\ \eta\equiv\xi\mmod{p}}}\hspace{-0.1in}\rho_{2}(\eta)^4 I_{2,2}(\eta,\eta)+p^{2s-2}\rho_0^4\,K_{2,2}^{2}\bigg) + p^{2s-2} K_{1,1}^1\\
&= \rho_0^{-4}\sum_{\eta\in\mathcal{E}(p^2)}\rho_2(\eta)^{4}I_{2,2}(\eta,\eta) + p^{2s-1} K_{2,2}^2+p^{2s-2} K_{1,1}^1.
\end{align*}
Repeated application of Lemma \ref{Ivvlemma} therefore yields
\begin{align*}
\mathfrak{J}(X)\ll \rho_0^{-4}\sum_{\omega\in\mathcal{E}(p^{\nu})}\rho_{\nu}(\omega)^{4}I_{\nu,\nu}(\omega,\omega) + \sum_{1\leq h\leq \nu}p^{2s-3+h} K_{h,h}^{h}.
\end{align*}
We have
\begin{align*}
I_{\nu,\nu}(\omega,\omega)&=\oint\left|f_{\nu}(\bm{\alpha},\omega)\right|^{2t}\left|f_{\nu}(\bm{\alpha},\omega)\right|^{4t}\,d\bm{\alpha}\\
&=\oint\left|f_{\nu}(\bm{\alpha},\omega)\right|^{6t}\,d\bm{\alpha} \ll (X/p^{\nu})^{\lambda+\iota},
\end{align*}
by Lemma \ref{atheta}. Consequently, by the definitions of $\nu$ and $\iota$, we have
\begin{align*}
\rho_0^{-4}\sum_{\omega\in\mathcal{E}(p^{\nu})}\rho_{\nu}(\omega)^{4}I_{\nu,\nu}(\omega,\omega) &\ll (X/p^{\nu})^{\lambda+\iota}\rho_0^{-4}\sum_{\omega\in\mathcal{E}(p^{\nu})}\rho_{\nu}(\omega)^{4}\\
&\ll X^{\lambda+\iota}p^{-(\lambda+\iota)B/2^{2n+2}}\\
&= X^{\lambda+\iota-(\lambda+\iota)/2^{2n+2}}=o(X^{\lambda-\iota}).
\end{align*}
By our choice of $\bm{\mathfrak{a}}\in\mathbb{D}$, and the discussions at the beginning of Section \ref{Prelim}, there is consequently some value of $h$ with $1\leq h\leq \nu$ such that
\begin{align*}
\mathfrak{J}(X)\ll  \nu p^{2s-3+h} K_{h,h}^{h}.
\end{align*}
By H\"older's inequality, we have
\begin{align*}
K_{h,h}^{h} \leq p^{(u-1)(2t-1)+u(4t-1)}K_{h+u-1,h+u}^h,
\end{align*}
and consequently
\begin{align}\label{Jh2hbd}
\mathfrak{J}(X)\ll \nu p^{4t+6tu-2u+h} K_{h+u-1,h+u}^{h}.
\end{align}

We define a sequence of indices by the following recurrence relations:
\begin{equation*}
a_0=h+u-1,\quad b_0=h+u,\quad a_m=b_{m-1},\quad b_m=2b_{m-1}-h+1.
\end{equation*}
For convenience we note that $b_m=2^m(u+1)+h-1$.
By Lemma \ref{Kiter}, while $p^{b_m}<X$, which is ensured by (\ref{bntheta}) for $m\leq n$, we have
\begin{align*}
K_{a_m,b_m}^{h} &\ll p^{\delta(2b_m-a_m-h+1)} (X/p^{b_m})^{(\lambda+\iota)/2}(K_{a_{m+1},b_{m+1}}^{h})^{1/2},
\end{align*}
which gives
\begin{align*}
K_{a_0,b_0}^{h} &\ll p^{\delta(u+2)} (X/p^{b_0})^{(\lambda+\iota)/2}(K_{a_{1},b_{1}}^{h})^{1/2},
\end{align*}
and, for $m\geq 1$,
\begin{align*}
K_{a_m,b_m}^{h} & \ll p^{3\cdot 2^{m-1}(u+1)\delta} (X/p^{b_m})^{(\lambda+\iota)/2}(K_{a_{m+1},b_{m+1}}^{h})^{1/2}.
\end{align*}
By iterating this relation, we see that
\begin{align*}
K_{h+u-1,h+u}^{h}&\ll p^{\delta(u+2+3(u+1)(n-1)/2)-n(\lambda+\iota) (u+1)/2}X^{(\lambda+\iota)(1-1/2^n)}(K_{a_{n},b_{n}}^{h})^{1/2^n}\\
&\ll p^{-\delta(u-1)/2+3\delta n(u+1)/2-\lambda n(u+1)/2}X^{(\lambda+\iota)(1-1/2^n)}(K_{a_{n},b_{n}}^{h})^{1/2^n},
\end{align*}
and using the definitions of $\Lambda$ and $n$, we deduce that
\begin{align*}
K_{h+u-1,h+u}^{h} 
&\ll p^{-n\Lambda(u+1)/2}X^{(\lambda+\iota)(1-1/2^n)}(K_{a_{n},b_{n}}^{h})^{1/2^n}\\
&\ll p^{-8t(u+1)}X^{(\lambda+\iota)(1-1/2^n)}(K_{a_{n},b_{n}}^{h})^{1/2^n}.
\end{align*}
Substituting this into (\ref{Jh2hbd}), we see that
\begin{align}\label{nextJbd}
\mathfrak{J}(X)&\ll \nu p^{-4t-2tu-2u+h} X^{(\lambda+\iota)(1-1/2^n)}(K_{a_{n},b_{n}}^{h})^{1/2^n}\nonumber\\
&\ll p^{\nu-2tu-2u}X^{(\lambda+\iota)(1-1/2^n)}(K_{a_{n},b_{n}}^{h})^{1/2^n}\log{X}.
\end{align}
A trivial bound gives us
$K_{a_{n},b_{n}}^{h}\ll X^{\lambda+\iota}.$
Combining this with (\ref{nextJbd}), and using (\ref{iotabd}), we obtain
\begin{align*}
\mathfrak{J}(X)&\ll p^{\nu-2tu-2u}X^{\lambda+\iota+\epsilon} \\
&\ll p^{-2\iota B-2u} X^{\lambda+\iota+\epsilon}\\
&\ll X^{\lambda-\iota-1/{2^{n+1}}+\epsilon}
= o(X^{\lambda-\iota}),
\end{align*}
which provides the required contradiction and completes the proof of Theorem \ref{JThm}. \qed

\newcommand{\noop}[1]{}

\end{document}